\newtheorem{theorem}{Theorem}[section]
\newtheorem{corollary}[theorem]{Corollary}
\newtheorem{proposition}[theorem]{Proposition}
\newtheorem{remark}[theorem]{Remark}
\newtheorem{question}[theorem]{Question}
\newcommand{\F}{{\mathbb F}}
\newcommand{\N}{\mathrm{N}}
\title{On the infiniteness of a family of APN functions}
\author{Daniele Bartoli\thanks{Dipartimento di Matematica e Informatica, Universit\`a degli Studi di Perugia,  Perugia, Italy. daniele.bartoli@unipg.it}, 
Marco Calderini\thanks{Department of Informatics, University of Bergen, Bergen, Norway. marco.calderini@uib.no}, 
Olga Polverino\thanks{Dipartimento di Matematica e Fisica, Universit\`a degli Studi della Campania ``Luigi Vanvitelli'', Caserta, Italy.
olga.polverino@unicampania.it},
and
Ferdinando Zullo\thanks{Dipartimento di Matematica e Fisica, Universit\`a degli Studi della Campania ``Luigi Vanvitelli'', Caserta, Italy.
ferdinando.zullo@unicampania.it}
}
\date{ }
\begin{document}

\maketitle

\begin{abstract}
APN functions play a fundamental role in cryptography against attacks on block ciphers. Several families of quadratic APN functions have been proposed in the recent years, whose construction relies on the existence of specific  families of polynomials. A key question connected with such constructions is to determine whether such APN functions exist for infinitely many dimensions or not. 

In this paper we consider a family of functions recently introduced by Li et al. in 2021 showing that for any dimension $m\geq 3$ there exists an APN function belonging to such a family. 

Our main result is proved by a combination of different techniques arising from both algebraic varieties over finite fields connected with linearized permutation rational functions and {partial vector space partitions}, together with investigations on the kernels of linearized polynomials. 
\end{abstract}

\section{Introduction}

Let $\mathbb{F}_{2^n}$ be the finite fields with $2^n$ elements, and denote by $\mathbb{F}_{2^n}^*$ its multiplicative group. Given a function $f:\mathbb{F}_{2^n}\to\mathbb{F}_{2^n}$, it is interesting to understand how many solutions $x$  the equation 
\begin{equation}\label{eq:fondamentale}
f(x+a)+f(x)=b
\end{equation}
has, for any $a\in\mathbb{F}_{2^n}^*$ and $b\in\mathbb{F}_{2^n}$. Note that if $x$ is a solution, then so it is $x+a$.
A function $f$ is said to be \emph{almost perfect nonlinear} (APN) if there are always exactly zero or two solutions to \eqref{eq:fondamentale}. The function $f(x+a)+f(x)$ is called the  derivative of $f$ in the direction $a$. Thus, an APN function is a function whose derivatives yield two-to-one maps over $\mathbb{F}_{2^n}$.

APN functions were introduced by Nyberg in \cite{Nyb93}, in the context of cryptography,  as the mappings with highest resistance to differential cryptanalysis \cite{diff}, one of the most efficient attacks that can be employed against block ciphers.

APN functions are also interesting from a theoretical point of view, as they correspond to optimal objects within different areas of mathematics and computer science.

For instance, they have been  constructed in connection with several  combinatorial  and geometrical objects, such as semi-biplanes \cite{CH99} and dual-hyperovals \cite{DE14}. In this context these mappings are also called semi-planar \cite{DO68}. Another application of APN functions is related with the construction of error correcting codes, since each APN function yields a double error correcting BCH-like code.

Equivalence issues play an important role in the study of such functions. The above connection with BCH codes also provides an equivalence definition between APN functions: two APN functions are said to be inequivalent if the (extended) BCH-like codes obtained from them are inequivalent codes (see \cite{dillon} for more details). This  relation is called CCZ-equivalence \cite{CCZ}, and it is the most general equivalence relation preserving the APN property.

In the last years, several families of (quadratic) APN  functions (see \cite{BCV20} or \cite{Car20} for a recent list of inequivalent APN families) were constructed. For some of these families the APN property is connected with the existence of  polynomials having specific features; see i.e.  \cite{BC08,Tan19,BCCCV20,BCCCV21}.  It is therefore crucial to understand whether APN functions coming from these constructions  exist for  infinitely many  dimensions or not.

For example, the hexanomials  
$$
d x^{2^s({2^m+1})}+x^{({2^m+1})}+x^{2^s+1}+x^{2^m({2^s+1})}+ c x^{2^{m+s}+1}+ c^{2^m}x^{{2^s+2^m}} \in \mathbb{F}_{2^{2m}}[x],
$$
where $d\notin \mathbb{F}_{2^m}$ and $\gcd(s,m)=1$, is APN if and only if $x^{2^s+1}+ c x^{2^s} + c ^{2^m}x+1=0$
has no solution $x$ such that $x^{2^m +1} = 1$; see \cite{BC08}. The existence of polynomials satisfying this last condition was verified by a computer in \cite{BC08} whenever $6\le 2m\le 500$.
The existence, for  infinite many values of $n$, of instances of APN functions belonging to this family has been investigated in several works \cite{Brack14,blu,gologlu}.

In this paper we consider the  new family of quadratic APN functions,  recently introduced in \cite{newAPN}, that  generalizes the one given in \cite{BBMM}. Let $s$ and $m$ be integers such that $\gcd(s,m)=1$. The mapping defined over $\mathbb{F}_{2^{3m}}$
\begin{equation}\label{eq:APNfun}
(x^{2^{m+s}} + \mu x^{2^s} + x)^{2^m+1}+vx^{2^m+1},
\end{equation}
where $\mu \in \mathbb{F}_{2^{3m}}$ satisfies $\N_{2^{3m}/2^m}(\mu):=\mu^{2^{2m}+2^m+1}\neq 1$ and $v\in \mathbb{F}_{2^m}^*$, is APN whenever $f_\mu^{(s)}(x):=x^{2^{m+s}} + \mu x^{2^s} + x$ permutes $\mathbb{F}_{2^{3m}}$. In \cite{newAPN}, the authors checked the existence of such an element $\mu$ for $3\le m\le 8$ and raised the following question.

\begin{question}\cite{newAPN}\label{question}
Let $m\ge3$ and $s\ge 1$, such that  $\gcd(s,m)=1$. Does there exist $\mu\in \mathbb{F}_{2^{3m}}$ such that $\N_{2^{3m}/2^m}(\mu)\neq 1$ and $f_\mu^{(s)}(x)$ is a permutation polynomial?
\end{question}

The main achievement of this paper is the proof of the existence for all $m\geq 3$ of suitable $s$ and $\mu$, $\N_{2^{3m}/2^m}(\mu)\neq 1$, for which the polynomial \eqref{eq:APNfun} is APN. This is done combining techniques from both algebraic geometry over finite fields and {partial vector space partitions}. A key tool in our machinery  is the investigation of the kernel of $2$-linearized polynomials of the type $f_\mu^{(s)}(x):=x^{2^{m+s}} + \mu x^{2^s} + x \in \mathbb{F}_{2^{3m}}[x]$ (see Section \ref{sec:1}), and determination of sufficient conditions involving $m$ and $s$  for the existence of elements $\mu$ with $\N_{2^{3m}/2^m}(\mu)\ne 1$  such that the polynomial $f_\mu^{(s)}(x)$ admits only one root; see Section \ref{sec:2}. This provides a (partial) positive answer to Question \ref{question} (see Corollary \ref{cor:ms}) and, thus, to the existence of APN functions belonging to the family  \eqref{eq:APNfun}.

A fruitful  connection to algebraic varieties over finite fields is provided in  Section \ref{sec:3}. Estimating the number of  $\mathbb{F}_{2^m}$-rational points of suitable three-dimensional varieties, we are able to prove, for the case $s=1$, that for any $m\ge 3$ there always exists an element $\mu\in \mathbb{F}_{2^{3m}}$ such that $f_\mu^{(1)}(x)$ is a permutation and $\N_{2^{3m}/2^m}(\mu)\neq 1$. This provides a positive answer to Question \ref{question} for the case  $s=1$.

\section{Bounds on the dimension of the kernel of $f_{\mu}^{(s)}(x)$}\label{sec:1}
{In this section we collect a few results on the kernel of linearized polynomials $f_{\mu}^{(s)}(x)$ that will be used in the sequel.}
Let $q$ be a prime power and let $m$ be a positive integer. A \emph{linearized polynomial}, or $q$-\emph{polynomial}, over $\F_{q^m}$ is a polynomial of the form
\[f(x)=\sum_{i=0}^t a_i x^{q^i},\]
where $a_i\in \F_{q^m}$, $t$ is a positive integer.
We denote by $\mathcal{L}_{m,q}$ the set of all $q$-polynomials over $\F_{q^m}$.
The $\F_q$-linear maps of $\F_{q^m}$ can be identified with the polynomials in ${\mathcal{L}}_{m,q}$ of degree at most $q^{m-1}$. The \emph{kernel}, of a polynomial $f(x)\in\mathcal{L}_{m,q}$ will be denoted by $\ker(f):=\{x\in\mathbb{F}_{q^m}\,:\,f(x)=0\}$.

{ For an element $\mu \in \F_{2^{3m}}^*$, denote by $P_{\mu}$ the one-dimensional vector $\F_{2^{3m}}$-subspace $\langle (1,\mu) \rangle_{\F_{2^{3m}}}$ of $(\mathbb{F}_{2^{3m}})^2$.}

\begin{proposition}\label{prop:weightpointperm}
Consider
\[U_{s}=\{ (x^{2^s},x^{2^{m+s}}+x) \colon x \in \F_{2^{3m}} \}.\]
Then $\dim_{\F_2}(\ker(f_{\mu}^{(s)}))=\dim_{\F_2}(U_{s}\cap P_{\mu})$.
In particular, $f_{\mu}^{(s)}(x)$ is a permutation if and only if $\dim_{\F_2}(U_{s}\cap P_{\mu})=0$.
\end{proposition}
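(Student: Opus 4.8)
The plan is to realize $U_s$ as the image of an explicit $\F_2$-linear isomorphism out of $\F_{2^{3m}}$, and then to match the condition ``a point of $U_s$ lies in $P_\mu$'' with the condition ``its preimage lies in $\ker(f_\mu^{(s)})$''. In this way the intersection $U_s\cap P_\mu$ and the kernel become two sides of a dimension-preserving correspondence.

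First I would introduce the map
$$\phi \colon \F_{2^{3m}} \to (\F_{2^{3m}})^2, \qquad \phi(x) = (x^{2^s},\, x^{2^{m+s}} + x),$$
which is $\F_2$-linear because each coordinate is a $2$-polynomial, and whose image is exactly $U_s$ by definition. The key preliminary observation is that $\phi$ is injective: its first coordinate is the map $x \mapsto x^{2^s}$, an automorphism of $\F_{2^{3m}}$ and hence a bijection, so $\phi(x)=\phi(y)$ already forces $x=y$. Consequently $\phi$ restricts to an $\F_2$-linear isomorphism $\F_{2^{3m}} \to U_s$, and in particular it sets up a dimension-preserving correspondence between $\F_2$-subspaces of $\F_{2^{3m}}$ and $\F_2$-subspaces of $U_s$.

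Next I would identify the relevant preimage. Writing a general element of $P_\mu = \langle(1,\mu)\rangle_{\F_{2^{3m}}}$ as $(\lambda, \lambda\mu)$ with $\lambda \in \F_{2^{3m}}$, the condition $\phi(x) \in P_\mu$ reads $x^{2^s} = \lambda$ together with $x^{2^{m+s}} + x = \lambda\mu$; eliminating $\lambda$ gives $x^{2^{m+s}} + x = \mu x^{2^s}$, which in characteristic $2$ is precisely $f_\mu^{(s)}(x) = 0$. Hence $\phi^{-1}(P_\mu) = \ker(f_\mu^{(s)})$, and since $\mathrm{im}(\phi) = U_s$ and $\phi$ is injective we obtain $\phi(\ker(f_\mu^{(s)})) = P_\mu \cap \mathrm{im}(\phi) = U_s \cap P_\mu$. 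Applying the isomorphism of the previous paragraph then yields $\dim_{\F_2}(\ker(f_\mu^{(s)})) = \dim_{\F_2}(U_s \cap P_\mu)$.

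Finally, the ``in particular'' assertion follows because $f_\mu^{(s)}$ is itself a $2$-polynomial, hence an $\F_2$-linear endomorphism of the finite field $\F_{2^{3m}}$; such a map is a permutation exactly when it is injective, i.e. when $\ker(f_\mu^{(s)}) = \{0\}$, equivalently $\dim_{\F_2}(U_s \cap P_\mu) = 0$. I do not expect a serious obstacle here: the content is the coordinate bookkeeping above, and the only point deserving a little care is the injectivity of $\phi$ via the bijectivity of its first coordinate, which is exactly what guarantees that $\F_2$-dimensions transfer faithfully between $\ker(f_\mu^{(s)})$ and $U_s \cap P_\mu$.
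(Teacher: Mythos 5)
Your proof is correct. The paper states this proposition without any proof, treating it as immediate, and your argument --- the $\F_2$-linear bijection $x \mapsto (x^{2^s}, x^{2^{m+s}}+x)$ onto $U_s$ (injective via the first coordinate), the identification of the preimage of $P_\mu$ with $\ker(f_\mu^{(s)})$ by eliminating $\lambda$, and the standard fact that a linearized polynomial permutes $\F_{2^{3m}}$ iff its kernel is trivial --- is exactly the intended bookkeeping, correctly carried out.
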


We recall the following result from \cite{polverino2020number}.

\begin{theorem}\cite[Theorem 1.5, Corollary 5.3]{polverino2020number}\label{th:PZ}
Let
\[ f(x)=-x+b_0x^{\sigma}+b_1x^{\sigma q^m}+b_2x^{\sigma q^{2m}}+\ldots+b_{t-1}x^{\sigma q^{m(t-1)}} \in \mathcal{L}_{mt,q}, \]
where $\sigma \in \mathrm{Aut}(\F_{q^{mt}})$ such that $\sigma|_{\F_{q^m}}\colon\F_{q^m}\rightarrow\F_{q^m}$ has order $m$.
Let $G(x)$ be the $q^m$-polynomial such that $f(x)=(G\circ\sigma)(x)-x$, i.e. $G(x)=\sum_{i=0}^{t-1} b_ix^{q^{mi}}$ and $H(x)=(G\circ\sigma)(x)$.
Then
\begin{equation}\label{eq:boundG}   
\dim_{\F_q} (\ker(f)) \leq t- \dim_{\F_{q^m}} \ker (G).
\end{equation}
Moreover,
$\dim_{\F_q}(\ker(f))=h$ if and only if
\begin{equation*}
\dim_{\F_{q^m}}(\ker(H^m-\mathrm{id}))=h.
\end{equation*}
\end{theorem}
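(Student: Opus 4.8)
The plan is to recast everything in terms of the single map $H=G\circ\sigma$, for which $f=H-\mathrm{id}$ and hence $\ker(f)=\mathrm{Fix}(H):=\{x\in\F_{q^{mt}}:H(x)=x\}$. The first observation I would record is that, although $H$ is only $\F_q$-linear, it is $\sigma$-\emph{semilinear} over $\F_{q^m}$: since $G$ is $\F_{q^m}$-linear and $\sigma$ is a field automorphism, $H(\lambda x)=G(\sigma(\lambda)\sigma(x))=\sigma(\lambda)\,H(x)$ for every $\lambda\in\F_{q^m}$. Consequently $H^{i}$ is $\sigma^{i}$-semilinear, and because $\sigma|_{\F_{q^m}}$ has order $m$ we get $\sigma^{m}|_{\F_{q^m}}=\mathrm{id}$, so that $H^{m}$ is genuinely $\F_{q^m}$-linear. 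Viewing $V:=\F_{q^{mt}}$ as a $t$-dimensional $\F_{q^m}$-space, the set $W:=\ker(H^{m}-\mathrm{id})$ is therefore an honest $\F_{q^m}$-subspace of $V$, so that $\dim_{\F_{q^m}}W$ is well defined.

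The heart of the argument is the equality of dimensions $\dim_{\F_q}\ker(f)=\dim_{\F_{q^m}}W$. I would first note the two facts that make $W$ the right object: any $x$ with $H(x)=x$ satisfies $H^{m}(x)=x$, so $\mathrm{Fix}(H)\subseteq W$; and $W$ is $H$-invariant (as $H$ commutes with $H^{m}$), so $H$ restricts to a $\sigma$-semilinear automorphism of $W$ with $(H|_{W})^{m}=\mathrm{id}$. Since $\sigma|_{\F_{q^m}}$ generates $\mathrm{Gal}(\F_{q^m}/\F_q)$ and has fixed field $\F_q$, the pair $(W,H|_{W})$ is precisely a semilinear representation of this cyclic Galois group of order $m$. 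Galois descent for vector spaces (the additive form of Hilbert's Theorem 90, also called Speiser's theorem) then yields a canonical isomorphism $\mathrm{Fix}(H)\otimes_{\F_q}\F_{q^m}\xrightarrow{\ \sim\ }W$, whence $\dim_{\F_q}\mathrm{Fix}(H)=\dim_{\F_{q^m}}W$. As $\mathrm{Fix}(H)=\ker(f)$, this is exactly the claimed equality. I expect this descent step to be the main obstacle to making fully self-contained; if one prefers to avoid citing descent, the same count follows from Dedekind's lemma on the linear independence of $\mathrm{id},\sigma,\dots,\sigma^{m-1}$ (equivalently, from a normal basis of $\F_{q^m}/\F_q$), which guarantees that an $\F_q$-basis of $\mathrm{Fix}(H)$ remains $\F_{q^m}$-independent in $W$ and spans it.

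With the equality in hand, the bound is a short linear-algebra estimate. Because $\sigma$ is a semilinear bijection, $\ker(H)=\sigma^{-1}(\ker(G))$ is an $\F_{q^m}$-subspace with $\dim_{\F_{q^m}}\ker(H)=\dim_{\F_{q^m}}\ker(G)$. Moreover $W\cap\ker(H)=\{0\}$: if $H(x)=0$ then $H^{m}(x)=0$, while $x\in W$ forces $H^{m}(x)=x$, so $x=0$. Adding the dimensions of these two $\F_{q^m}$-subspaces of the $t$-dimensional space $V$ gives $\dim_{\F_{q^m}}W+\dim_{\F_{q^m}}\ker(G)\le t$. Combining this with the equality $\dim_{\F_q}\ker(f)=\dim_{\F_{q^m}}W$ proved above produces $\dim_{\F_q}\ker(f)\le t-\dim_{\F_{q^m}}\ker(G)$, which is the stated inequality. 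The one place requiring care is the verification that $H^{m}$ is $\F_{q^m}$-linear (so that $W$ and its $\F_{q^m}$-dimension are meaningful), and this is exactly where the hypothesis that $\sigma|_{\F_{q^m}}$ has order $m$, rather than merely dividing $m$, is used.
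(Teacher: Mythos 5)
Your proof is correct, but one remark is needed before any comparison: the paper you were checked against contains no proof of this statement. Theorem \ref{th:PZ} is imported verbatim from \cite{polverino2020number} (Theorem 1.5 and Corollary 5.3 there), so the only meaningful comparison is with the proof in that reference. Your argument---recasting $f$ as $H-\mathrm{id}$ with $H=G\circ\sigma$ a $\sigma$-semilinear map over $\F_{q^m}$, checking that $W=\ker(H^m-\mathrm{id})$ is an $H$-invariant $\F_{q^m}$-subspace on which $H$ defines a semilinear representation of the cyclic group $\mathrm{Gal}(\F_{q^m}/\F_q)$, and invoking Speiser's theorem (additive Hilbert 90) to get $\dim_{\F_q}\ker(f)=\dim_{\F_{q^m}}W$---is complete, and it in fact establishes the dimension \emph{equality}, which is stronger than the stated ``$=h$ if and only if $=h$'' formulation. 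Combined with your observations that $W\cap\ker(H)=\{0\}$ and $\dim_{\F_{q^m}}\ker(H)=\dim_{\F_{q^m}}\ker(G)$, the bound \eqref{eq:boundG} drops out of a single subspace-dimension count. This is the same circle of ideas as in \cite{polverino2020number}, where the results also rest on the fixed-point theory of semilinear transformations, but there the two assertions are proved separately (the bound as Theorem 1.5, the $H^m$-criterion as Corollary 5.3, via invertible semilinear maps on suitable $H$-invariant subspaces); your single descent isomorphism $\mathrm{Fix}(H)\otimes_{\F_q}\F_{q^m}\cong W$ yields both at once, at the price of citing descent (or the Dedekind/normal-basis argument you sketch as a substitute).

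Two small points of accuracy, neither a gap. First, your closing sentence locates the use of the hypothesis that $\sigma|_{\F_{q^m}}$ has order exactly $m$ in the verification that $H^m$ is $\F_{q^m}$-linear; for that step, order \emph{dividing} $m$ suffices. The full hypothesis is needed precisely where you in fact use it earlier: to guarantee that $\langle\sigma|_{\F_{q^m}}\rangle$ is all of $\mathrm{Gal}(\F_{q^m}/\F_q)$ with fixed field $\F_q$, so that descent computes a dimension over $\F_q$ rather than over a larger intermediate field. Second, you tacitly use that $H$ (equivalently $\sigma$) is $\F_q$-linear, so that $\ker(f)=\mathrm{Fix}(H)$ is an $\F_q$-subspace; this is guaranteed by the hypothesis $f\in\mathcal{L}_{mt,q}$, which forces $\sigma$ to be a power of the $q$-Frobenius, and is worth stating explicitly.
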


As a consequence we obtain the following.

\begin{corollary}\label{cor:8}
Let $f_{\mu}^{(s)}(x)=x^{2^{m+s}}+\mu x^{2^s}+x \in \F_{2^{3m}}[x]$ with $\gcd(s,m)=1$.
For every $\mu \in \F_{2^{3m}}$,
\[ \dim_{\F_2}(U_s \cap P_{\mu})=\dim_{\F_2}(\ker(f_{\mu}^{(s)}))\leq 3. \]
Also
\[ \dim_{\F_2}(\ker(f_{0}^{(s)}))=\gcd(3,s)\in \{1,3\}. \]
Moreover, if $\N_{q^{3m}/q^m}(\mu)=1$ then 
\begin{equation}\label{eq:norm1} \dim_{\F_2}(\ker(f_{\mu}^{(s)}))\leq 2. \end{equation}
\end{corollary}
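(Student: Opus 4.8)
The plan is to recast $f_\mu^{(s)}$ into the exact shape demanded by Theorem \ref{th:PZ} and then read everything off from the associated polynomial $G$. I work over $\F_{q^{mt}}$ with $q=2$ and $t=3$, and take the automorphism $\sigma\colon x\mapsto x^{2^s}$. Since $\gcd(s,m)=1$, the restriction $\sigma|_{\F_{2^m}}$ has order $m$, so the hypotheses of Theorem \ref{th:PZ} hold. Comparing exponents modulo $3m$, the four slots $0,s,s+m,s+2m$ are pairwise distinct (using $\gcd(s,m)=1$), and $f_\mu^{(s)}(x)=x^{2^{m+s}}+\mu x^{2^s}+x$ is precisely $-x+b_0x^{\sigma}+b_1x^{\sigma q^{m}}+b_2x^{\sigma q^{2m}}$ with $b_0=\mu$, $b_1=1$, $b_2=0$ (the term $x$ playing the role of $-x$ in characteristic $2$). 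Hence $G(x)=\sum_i b_ix^{q^{mi}}=x^{2^m}+\mu x$, an $\F_{2^m}$-linear endomorphism of $\F_{2^{3m}}$ regarded as a $3$-dimensional $\F_{2^m}$-space. Proposition \ref{prop:weightpointperm} already supplies $\dim_{\F_2}(U_s\cap P_\mu)=\dim_{\F_2}(\ker f_\mu^{(s)})$, so it remains to control this common dimension through $G$.

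First I would compute $\dim_{\F_{2^m}}\ker G$. A nonzero $x$ lies in $\ker G$ iff $x^{2^m}=\mu x$, i.e. $x^{2^m-1}=\mu$; if $x_1,x_2$ are two such elements then $(x_1/x_2)^{2^m-1}=1$, so $x_1/x_2\in\F_{2^m}^{*}$. Thus $\ker G$ is either $\{0\}$ or a single $\F_{2^m}$-line, whence $\dim_{\F_{2^m}}\ker G\in\{0,1\}$. Substituting into the bound \eqref{eq:boundG} gives $\dim_{\F_2}(\ker f_\mu^{(s)})\le t-\dim_{\F_{2^m}}\ker G\le 3$, which is the first assertion.

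The norm statement is the heart of the corollary, and the key is to identify exactly which $\mu$ give $G$ a nontrivial kernel. By the equivalence above, $\ker G\neq\{0\}$ iff $\mu$ lies in the image of the power map $x\mapsto x^{2^m-1}$ on $\F_{2^{3m}}^{*}$. Since $\gcd(2^m-1,2^{3m}-1)=2^m-1$, this image is the unique subgroup of order $(2^{3m}-1)/(2^m-1)=2^{2m}+2^m+1$, which is exactly $\{y:\N_{2^{3m}/2^m}(y)=y^{2^{2m}+2^m+1}=1\}=\ker\N$. Therefore $\N_{2^{3m}/2^m}(\mu)=1$ forces $\dim_{\F_{2^m}}\ker G=1$, and \eqref{eq:boundG} yields $\dim_{\F_2}(\ker f_\mu^{(s)})\le 3-1=2$, proving \eqref{eq:norm1}. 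I expect the identification of the image of the $(2^m-1)$-power map with the norm-one subgroup to be the main (albeit standard) step; the rest is bookkeeping on exponents.

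Finally, the case $\mu=0$ lies outside the reach of the bound (there $G(x)=x^{2^m}$ has $\ker G=\{0\}$, so \eqref{eq:boundG} only gives $\le 3$) and must be handled directly. Here $f_0^{(s)}(x)=x^{2^{m+s}}+x$ vanishes precisely on the fixed field of $x\mapsto x^{2^{m+s}}$ inside $\F_{2^{3m}}$, namely $\F_{2^{\gcd(m+s,3m)}}$. Because $\gcd(m+s,m)=\gcd(s,m)=1$, this common degree equals $\gcd(m+s,3)\in\{1,3\}$, pinning down the value; equivalently one reaches the same conclusion via the \emph{moreover} clause of Theorem \ref{th:PZ} applied to $H(x)=x^{2^{m+s}}$, whose $m$-th iterate is $x\mapsto x^{2^{m(m+s)}}$.
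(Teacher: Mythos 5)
Your handling of the two inequalities is correct and is essentially the paper's own argument: both you and the authors apply Theorem \ref{th:PZ} with $\sigma\colon x\mapsto x^{2^s}$, $t=3$, and $G(x)=\mu x+x^{2^m}$, and both derive \eqref{eq:norm1} from the fact that $\N_{2^{3m}/2^m}(\mu)=1$ forces $\dim_{\F_{2^m}}(\ker G)=1$. The only difference is one of detail: the paper simply asserts that norm-one $\mu$ makes $G$ have $2^m$ roots, while you prove it by identifying the image of the $(2^m-1)$-power map on $\F_{2^{3m}}^*$ with the norm-one subgroup (the standard Hilbert 90 argument). That extra detail is welcome, not a deviation.

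The problem is the $\mu=0$ clause. Your computation there is mathematically correct: $\ker(f_0^{(s)})=\F_{2^{\gcd(m+s,3m)}}$, and since $\gcd(m+s,m)=\gcd(s,m)=1$ this gives $\dim_{\F_2}(\ker(f_0^{(s)}))=\gcd(3,m+s)$. But the statement you were asked to prove asserts that this dimension equals $\gcd(3,s)$, and $\gcd(3,m+s)\ne\gcd(3,s)$ in general: for $m=7$, $s=2$ (admissible, since $\gcd(s,m)=1$) one gets $\gcd(3,m+s)=3$ but $\gcd(3,s)=1$, and indeed $\ker(f_0^{(2)})=\F_{2^{\gcd(9,21)}}=\F_{2^3}$ has dimension $3$; conversely for $m=4$, $s=3$ one gets $\gcd(3,m+s)=1$ but $\gcd(3,s)=3$. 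The two expressions coincide for all admissible $s$ exactly when $3\mid m$. So when you write that $\gcd(m+s,3)$ ``pins down the value,'' you are silently proving a \emph{different} formula than the one claimed, and you should have flagged the mismatch rather than glossing over it. To be fair, the defect originates in the paper: its own proof computes $\dim_{\F_2}(\ker(f_0^{(s)}))=\gcd(3m,s+m)$ and then declares that ``the assertion follows,'' which it does not; the stated value $\gcd(3,s)$ appears to be an error, and your $\gcd(3,m+s)$ is the correct one. Note that the weaker claim $\dim_{\F_2}(\ker(f_0^{(s)}))\in\{1,3\}$, which is all that has any role in the rest of the paper, survives under the corrected formula.
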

\begin{proof}
Since $\gcd(s,m)=1$, $\sigma \colon x \in \F_{2^{3m}} \mapsto x^{2^s} \in \F_{2^{3m}}$ is an automorphism of $\F_{2^{3m}}$ such that $\sigma|_{\F_{2^m}}\colon\F_{2^m}\rightarrow\F_{2^m}$ has order $m$. Theorem \ref{th:PZ} applied to $f_{\mu}^{(s)}(x)$ yields that for every $\mu \in \F_{2^{3m}}$
\[ \dim_{\F_2}(\ker(f_{\mu}^{(s)}))\leq 3. \]
Therefore, by Proposition \ref{prop:weightpointperm} it follows that $\dim_{\F_2}(U_s\cap P_{\mu})=\dim_{\F_2}(\ker(f_{\mu}^{(s)}))\leq 3$.

When $\mu=0$, $f_{0}^{(s)}(x)=x^{2^{m+s}}+x$ and $\dim_{\F_2}(\ker(f_{0}^{(s)}))=\gcd(3m,s+m)$ and the assertion then follows.

If $\N_{2^{3m}/2^m}(\mu)=1$ then $G(x)=\mu x+x^{2^{m}}$ has $2^m$ roots in $\F_{2^{3m}}$ and by  Theorem \ref{th:PZ}\eqref{eq:boundG} we obtain
\[ \dim_{\F_2}(\ker(f_{\mu}^{(s)}))\leq 2. \]
\end{proof}

\section{A partial answer via the dimension of the kernel of $f_{\mu}^{(s)}(x)$}\label{sec:2}

{This section provides a positive answer to specific instances of Question \ref{question}, investigating the kernel of the polynomials $f_{\mu}^{(s)}(x)$.}

\begin{proposition}\label{prop:dim1}
Let $s$ be a positive integer such that $s+m$ is coprime with $3m$.
For any $\eta \in \F_{2^{3m}}$ with $\eta \ne 1$ there exist a unique $x_0$ and a unique $y_0$ in $\F_{2^{3m}}^*$ such that 
\[ (x_0^{2^s},x_0+x_0^{2^{s+m}})=(y_0,\eta y_0^{2^m}). \]
\end{proposition}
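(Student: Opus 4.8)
The plan is to eliminate $y_0$ and collapse the vector equation into a single power equation in $x_0$ alone, whose solvability is then entirely controlled by a greatest common divisor of exponents.

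First I would read off from the first coordinate that $y_0 = x_0^{2^s}$, so that $y_0$ is completely determined by $x_0$; in particular a solution pair is unique as soon as $x_0$ is unique, and $y_0 \in \F_{2^{3m}}^*$ precisely when $x_0 \in \F_{2^{3m}}^*$. Raising to the power $2^m$ gives $y_0^{2^m} = x_0^{2^{s+m}}$, which I substitute into the second coordinate. The equation $x_0 + x_0^{2^{s+m}} = \eta y_0^{2^m}$ then becomes $x_0 + x_0^{2^{s+m}} = \eta x_0^{2^{s+m}}$, and since we work in characteristic $2$ this rearranges to $x_0 = (\eta+1)x_0^{2^{s+m}}$. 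Here the hypothesis $\eta \ne 1$ enters: it guarantees $\eta + 1 \ne 0$, so for a nonzero $x_0$ I may divide and rewrite the relation as $x_0^{2^{s+m}-1} = (\eta+1)^{-1}$.

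Thus solving the system amounts to counting the solutions in $\F_{2^{3m}}^*$ of a single binomial equation $x^{d} = c$ with $d = 2^{s+m}-1$ and $c = (\eta+1)^{-1} \in \F_{2^{3m}}^*$. The number of such solutions is governed by $\gcd(d, 2^{3m}-1)$, and the whole point is the computation $\gcd(2^{s+m}-1,\, 2^{3m}-1) = 2^{\gcd(s+m,\,3m)} - 1 = 2^1 - 1 = 1$, using the standard identity $\gcd(2^a-1,2^b-1)=2^{\gcd(a,b)}-1$ together with the assumption $\gcd(s+m,3m)=1$.

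The crux, and the only place the hypothesis is used, is precisely this gcd step: it says that $x \mapsto x^{2^{s+m}-1}$ is a bijection of the cyclic group $\F_{2^{3m}}^*$. Consequently $c$ automatically lies in the image and has exactly one preimage $x_0 \in \F_{2^{3m}}^*$; setting $y_0 = x_0^{2^s}$ then yields the unique pair in $\F_{2^{3m}}^* \times \F_{2^{3m}}^*$, and reversing the reduction confirms that it does satisfy both coordinate equations. I do not expect a serious obstacle beyond carefully tracking the nonzero constraints, since both existence and uniqueness are delivered simultaneously by the bijectivity of the power map.
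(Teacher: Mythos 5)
Your proposal is correct and follows essentially the same route as the paper: read off $y_0=x_0^{2^s}$ from the first coordinate, substitute into the second to get $x_0^{2^{s+m}-1}=(\eta+1)^{-1}$, and conclude existence and uniqueness from $\gcd(s+m,3m)=1$. You merely make explicit the step the paper states tersely, namely that $\gcd(2^{s+m}-1,2^{3m}-1)=2^{\gcd(s+m,3m)}-1=1$ forces $x\mapsto x^{2^{s+m}-1}$ to be a bijection of $\F_{2^{3m}}^*$.
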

\begin{proof}
Let $\eta \in \F_{2^{3m}}$ with $\eta \ne 1$.
Our aim is to find a unique $x_0$ and a unique $y_0$ in $\F_{2^{3m}}^*$ such that
\[ (x_0^{2^s},x_0^{2^{s+m}}+x_0)=(y_0,\eta y_0^{2^m}). \]
From the above identity one gets
\[ y_0=x_0^{2^s} \,\,\,\text{and}\,\,\, \eta y_0^{2^m}=x_0^{2^{s+m}}+x_0, \]
from which we obtain
\[ x_0^{2^{s+m}-1}=\frac{1}{\eta-1}. \]
Since $\gcd(s+m,3m)=1$ and $\N_{2^{3m}/2}\left(\frac{1}{\eta-1}\right)=1$, the above equation admits exactly one solution.
\end{proof}

{In what follows, for an $\eta\in \F_{2^{3m}}^*$ let $W_{\eta}$ denote the vector space $\{ (y,\eta y^{2^m}) \colon y \in \F_{2^{3m}} \}$, seen as three-dimensional vector space over $\mathbb{F}_{2^m}$.}

\begin{proposition}\label{prop:relationni}
Let $s$ be a positive integer such that $s+m$ is coprime with $3m$, $\alpha \in \F_{2^m}\setminus \{1\}$, and  
\[ n_i(\alpha)=\{P_{\mu} \colon \mu \in \F_{2^{3m}},\N_{2^{3m}/2^m}(\mu)=\alpha,\,\, \dim_{\F_2}(P_\mu\cap U_s)=i\}\] 
for any $i \in \{0,1,2,3\}$.
Then 
\begin{equation*}
n_0(\alpha)=2n_2(\alpha)+6n_3(\alpha). \end{equation*}
\end{proposition}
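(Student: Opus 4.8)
The plan is to prove the identity by a double count between the elements $\eta$ and the elements $\mu$ that share the common norm value $\alpha$, using Proposition~\ref{prop:dim1} to control how $U_s$ meets each $W_\eta$. Throughout one may assume $\alpha\neq 0$, since $P_\mu$ is defined only for $\mu\in\F_{2^{3m}}^*$ and the case $\alpha=0$ is vacuous. Abbreviating $\N_{2^{3m}/2^m}$ to $\N$, write $N=2^{2m}+2^m+1$ for the common size of the fibres $\{\mu:\N(\mu)=\alpha\}$ and $\{\eta:\N(\eta)=\alpha\}$ of the norm map.

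First I would record how each line $P_\mu$ (with $\N(\mu)=\alpha$) is distributed among the spaces $W_\eta$. A nonzero vector $(\lambda,\lambda\mu)\in P_\mu$ lies in $W_\eta$ precisely when $\eta=\mu\lambda^{1-2^m}$; this $\eta$ is unique and satisfies $\N(\eta)=\N(\mu)\N(\lambda)^{1-2^m}=\alpha$, because $\N(\lambda)\in\F_{2^m}^*$ forces $\N(\lambda)^{2^m-1}=1$. Hence the nonzero vectors of $P_\mu$ are partitioned among the $W_\eta$ with $\N(\eta)=\alpha$, and consequently
\[
U_s\cap P_\mu=\bigcup_{\N(\eta)=\alpha}\big(U_s\cap P_\mu\cap W_\eta\big).
\]

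Next I would invoke Proposition~\ref{prop:dim1}. Since $\alpha\neq 1$, every $\eta$ with $\N(\eta)=\alpha$ satisfies $\eta\neq 1$, so $U_s\cap W_\eta$ consists of the zero vector together with a single nonzero vector $p_\eta$. As $p_\eta$ has nonzero first coordinate, it lies on exactly one line $P_{\mu_\eta}$, and the computation above shows $\N(\mu_\eta)=\alpha$. Combining this with the displayed decomposition, the nonzero vectors of $U_s\cap P_\mu$ are exactly the $p_\eta$ with $\mu_\eta=\mu$; since distinct $\eta$ place $p_\eta$ in distinct $W_\eta$, these are pairwise distinct. Therefore $\dim_{\F_2}(U_s\cap P_\mu)=i$ holds if and only if the fibre of the map $\eta\mapsto\mu_\eta$ over $\mu$ has cardinality $2^i-1$.

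Finally I would double count the elements $\eta$ with $\N(\eta)=\alpha$. Summing the fibre sizes over the targets gives $\sum_{i=0}^{3}n_i(\alpha)(2^i-1)=N$, while counting the lines $P_\mu$ directly gives $\sum_{i=0}^{3}n_i(\alpha)=N$. Subtracting the second identity from the first eliminates $N$ and yields $-n_0(\alpha)+2n_2(\alpha)+6n_3(\alpha)=0$, which is the assertion. The step I expect to be the main obstacle is the exact bijection between the nonzero vectors of $U_s\cap P_\mu$ and the fibre over $\mu$: it requires both that every such vector arises as some $p_\eta$ (using the $W_\eta$-partition together with $\alpha\neq 1$ to exclude $\eta=1$) and that $\eta\mapsto p_\eta$ is injective, so that the count of nonzero vectors, and hence the $\F_2$-dimension, is read off precisely from the fibre size.
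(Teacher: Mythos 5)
Your construction is, at bottom, the same double count as the paper's own proof: the map $\eta\mapsto\mu_\eta$ you build from Proposition~\ref{prop:dim1} encodes the bijection between the norm fibre $\{\eta : \N_{2^{3m}/2^m}(\eta)=\alpha\}$ and the nonzero vectors of $U_s$ lying in $\bigcup_{\N_{2^{3m}/2^m}(\mu)=\alpha}P_\mu=\bigcup_{\N_{2^{3m}/2^m}(\eta)=\alpha}W_\eta$, and your two counting identities are exactly the paper's \eqref{eq:2} and \eqref{eq:1}. The supporting details you give (the computation $\eta=\mu\lambda^{1-2^m}$ with its norm argument, the injectivity of $\eta\mapsto p_\eta$, the exclusion of $\eta=1$ via $\alpha\ne 1$) are correct, and in fact make explicit what the paper compresses into ``Clearly''.

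There is, however, one genuine gap: both of your identities are truncated at $i=3$ without justification. Writing $\sum_{i=0}^{3}n_i(\alpha)=N$ asserts that \emph{every} $\mu$ in the norm fibre satisfies $\dim_{\F_2}(P_\mu\cap U_s)\le 3$, and the same assumption underlies $\sum_{i=0}^{3}n_i(\alpha)(2^i-1)=N$; your fibre analysis only shows that the fibre over $\mu$ has size $2^{i(\mu)}-1$ for whatever $i(\mu)$ is, and nothing in your argument caps $i(\mu)$ at $3$ (the trivial degree bound on $f_\mu^{(s)}$ only gives $i(\mu)\le m+s$). If some $n_i(\alpha)>0$ with $i\ge 4$ were possible, your subtraction would yield only $n_0(\alpha)=2n_2(\alpha)+6n_3(\alpha)+\sum_{i\ge 4}(2^i-2)\,n_i(\alpha)$, which is strictly weaker than the assertion. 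The missing ingredient is precisely Corollary~\ref{cor:8}, i.e.\ $\dim_{\F_2}(P_\mu\cap U_s)=\dim_{\F_2}(\ker(f_\mu^{(s)}))\le 3$ for all $\mu$, which is not a triviality --- it rests on Theorem~\ref{th:PZ} --- and which the paper invokes at exactly this point before writing down \eqref{eq:2}. The repair is a single citation, but as written the truncation of the sums is unjustified and the stated identity does not follow from your argument alone.
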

\begin{proof}
Clearly,
\begin{equation*}
    \bigcup_{\substack{\mu \in \F_{2^{3m}} \\ \N_{2^{3m}/2^m}(\mu)=\alpha}} 
    P_{\mu}=\bigcup_{\substack{\eta \in \F_{2^{3m}}\\ \N_{2^{3m}/2^m}(\eta)=\alpha}} W_{\eta},
\end{equation*}
and 
\[ |\{P_{\mu} \colon \mu \in \F_{2^{3m}}, \N_{2^{3m}/2^m}(\mu)=\alpha\}|=|\{W_{\eta} \colon \eta \in \F_{2^{3m}}, \N_{2^{3m}/2^m}(\eta)=\alpha\}|=2^{2m}+2^m+1. \]

By Corollary \ref{cor:8}, we have
\[ \dim_{\F_2}(P_\mu\cap U_s)\leq 3 \]
for any $\mu \in \F_{2^{3m}}$, so that 
\begin{equation}\label{eq:2} n_0(\alpha)+n_1(\alpha)+n_2(\alpha)+n_3(\alpha)=2^{2m}+2^m+1. 
\end{equation}

By Proposition \ref{prop:dim1} we have
\[ \dim_{\F_2}(W_\eta \cap U_s)=1, \]
for every $\eta \in \F_{2^{3m}}\setminus\{1\}$.
Since
\[ U_s \cap \left( \bigcup_{\substack{\eta \in \F_{2^{3m}}\\ \N_{2^{3m}/2^m}(\eta)=\alpha}} P_{\mu} \right)=U_s \cap \left( \bigcup_{\substack{\eta \in \F_{2^{3m}}\\ \N_{2^{3m}/2^m}(\eta)=\alpha}} W_{\eta} \right) \]
we get
\[ \left| (U_s \setminus\{(0,0)\}) \cap \left( \bigcup_{\substack{\eta \in \F_{2^{3m}}\\ \N_{2^{3m}/2^m}(\eta)=\alpha}} P_{\mu} \right) \right|= \sum_{\substack{\eta \in \F_{2^{3m}}\\ \N_{2^{3m}/2^m}(\eta)=\alpha}} |W_\eta \cap (U_s\setminus\{(0,0)\})|=2^{2m}+2^m+1. \]
Hence,
\begin{equation}\label{eq:1} \left| (U_s \setminus\{(0,0)\}) \cap \left( \bigcup_{\substack{\mu \in \F_{2^{3m}}\\ \N_{2^{3m}/2^m}(\mu)=\alpha}} P_{\mu} \right) \right|=n_1(\alpha)+3n_2(\alpha)+7n_3(\alpha)=2^{2m}+2^m+1. 
\end{equation}

By \eqref{eq:1} and \eqref{eq:2} we get the assertion.
\end{proof}

\begin{remark}\label{rem:point}
By {Proposition \ref{prop:relationni}} the existence of an element $\mu \in \F_{2^{3m}}$ such that $\alpha=\N_{2^{3m}/2^m}(\mu)\notin\{0,1\}$ and $\dim_{\F_2}(\ker(f_{\mu}^{(s)}))\geq 2$ implies the existence of an element $\eta \in \F_{2^{3m}}$ such that $\N_{2^{3m}/2^m}(\eta)=\alpha=\N_{2^{3m}/2^m}(\mu)\notin\{0,1\}$ and $f_{\eta}^{(s)}(x)$ permutes $\F_{2^{3m}}$.
\end{remark}

\begin{proposition}\label{prop:recursive}
Let $s$ be a positive integer such that $s+m$ is coprime with $3m$.
For every $\mu \in \F_{2^{3m}}$,
\[ \dim_{\F_2}(\ker(f_\mu^{(s)}))=\dim_{\F_{2^m}}(\ker(H^m-\mathrm{id})), \]
where \[ H^i(x)=h_{0i}x^{2^{is}}+h_{1i}x^{2^{is+m}}+h_{2i}x^{2^{is+2m}}, \]
with
\[ \left\{
\begin{array}{lll}
h_{01}=\mu,\\
h_{11}=1,\\
h_{21}=0,
\end{array}
\right. \]
and
\[ \left\{
\begin{array}{lll}
h_{0i+1}=\mu h_{0i}^{2^s}+h_{2i}^{2^{s+m}},\\
h_{1i+1}=\mu h_{1i}^{2^s}+h_{0i}^{2^{s+m}},\\
h_{2i+1}=\mu h_{2i}^{2^s}+h_{1i}^{2^{s+m}},
\end{array}
\right. \]
for any $i\geq 1$.
\end{proposition}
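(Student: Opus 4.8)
The plan is to deduce the claimed formula directly from Theorem \ref{th:PZ}, which already supplies the characterisation of $\dim_{\F_2}(\ker(f_\mu^{(s)}))$ through $H^m-\mathrm{id}$; what remains is to make $H$ and its iterates explicit. First I would note that the hypothesis $\gcd(s+m,3m)=1$ forces $\gcd(s,m)=1$, since $\gcd(s,m)=\gcd(s+m,m)$ divides $\gcd(s+m,3m)$; hence $\sigma\colon x\mapsto x^{2^s}$ restricts to an automorphism of order $m$ on $\F_{2^m}$ and Theorem \ref{th:PZ} applies with $q=2$ and $t=3$. In that normal form the three unknown coefficients multiply the monomials $x^{2^s}$, $x^{2^{s+m}}$, $x^{2^{s+2m}}$; comparing with $f_\mu^{(s)}(x)=x+\mu x^{2^s}+x^{2^{s+m}}$ and using $-x=x$ in characteristic $2$ gives $b_0=\mu$, $b_1=1$, $b_2=0$. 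Consequently $G(x)=\mu x+x^{2^m}$ and
\[ H(x)=(G\circ\sigma)(x)=\mu x^{2^s}+x^{2^{s+m}}. \]
This is exactly the asserted form of $H^1$, so $h_{01}=\mu$, $h_{11}=1$, $h_{21}=0$ provide the base case, while the first displayed equality of the statement is precisely the second part of Theorem \ref{th:PZ}.

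For the recursion I would proceed by induction on $i$, writing $H^{i+1}=H\circ H^i$ and substituting the inductive form $H^i(x)=h_{0i}x^{2^{is}}+h_{1i}x^{2^{is+m}}+h_{2i}x^{2^{is+2m}}$ into $H(y)=\mu y^{2^s}+y^{2^{s+m}}$. Raising $H^i$ to the power $2^s$ increases every $2$-exponent by $s$ and applies the Frobenius $c\mapsto c^{2^s}$ to the coefficients, producing the terms $\mu\,h_{ji}^{2^s}x^{2^{(i+1)s+jm}}$ for $j\in\{0,1,2\}$; raising $H^i$ to the power $2^{s+m}$ increases exponents by $s+m$ and applies $c\mapsto c^{2^{s+m}}$, producing $h_{ji}^{2^{s+m}}x^{2^{(i+1)s+(j+1)m}}$. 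Collecting the coefficients of $x^{2^{(i+1)s}}$, $x^{2^{(i+1)s+m}}$ and $x^{2^{(i+1)s+2m}}$ then yields exactly the three recurrences for $h_{0,i+1}$, $h_{1,i+1}$, $h_{2,i+1}$ stated in the proposition.

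The only delicate point --- and the one place where the cyclic structure of $\F_{2^{3m}}$ intervenes --- is the $j=2$ term arising from the $2^{s+m}$ part, whose exponent is $x^{2^{(i+1)s+3m}}$. Because $x^{2^{3m}}=x$ on $\F_{2^{3m}}$, this folds back to $h_{2i}^{2^{s+m}}x^{2^{(i+1)s}}$ and contributes the summand $h_{2i}^{2^{s+m}}$ to $h_{0,i+1}$, whereas the coefficients of $x^{2^{(i+1)s+m}}$ and $x^{2^{(i+1)s+2m}}$ receive no wrap-around term. I expect this reduction of $2$-exponents modulo $3m$ to be the only step requiring care; everything else is routine bookkeeping of $2$-linearized monomials. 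With the base case and this recurrence established, the induction closes and the proposition follows.
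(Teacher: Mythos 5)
Your proposal is correct and follows exactly the paper's route: the paper's entire proof is the one-line remark that the statement ``immediately follows from Theorem \ref{th:PZ} with $H(x)=\mu x^{2^s}+x^{2^{s+m}}$.'' Your write-up simply makes explicit what the paper leaves implicit---that $\gcd(s+m,3m)=1$ gives $\gcd(s,m)=1$ so the theorem applies, the identification $G(x)=\mu x+x^{2^m}$, and the inductive bookkeeping (including the wrap-around $x^{2^{(i+1)s+3m}}=x^{2^{(i+1)s}}$ on $\F_{2^{3m}}$) that produces the stated recurrences.
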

\begin{proof}
The proof immediately follows from Theorem \ref{th:PZ} with $H(x)=\mu x^{2^s}+x^{2^{s+m}}$.
\end{proof}

\begin{theorem}\label{th:cases}
Let $\overline{m},\overline{s},m,t,j,s \in \mathbb{N}$ and let $\mu \in \F_{2^{\overline{m}}}$ be a root of $g(x)$ as in Table \ref{table}.
Let $\overline{H}(x)=\mu x^{2^{\overline{s}}}+x^{2^{\overline{s}+\overline{m}}} \in \mathcal{L}_{3\overline{m},2}$ and let $H(x)=\mu x^{2^{s}}+x^{2^{s+ m}} \in \mathcal{L}_{3 m,2}$. 
Then
\[ \overline{H}^{\overline{m}}(x)=x^{2^{\overline{m}(\overline{s}+j)}}\,\,\,\text{and}\,\,\, H^{m}(x)=x^{2^{m(s+t)}}.\]
Hence, if $3\mid s+t$, $H^m(x)=x$ and $\dim_{\F_2}(\ker(f_{\mu}^{(s)}))=3$.
In particular, $\N_{2^{3m}/2^m}(\mu)\ne 1$.
\begin{table}[ht!]
    \centering
    \begin{tabular}{|c|c|c|c|}
    \hline
    $(\overline{m},\overline{s})$ & $g(x)$ & $j$& Conditions \\
    \hline
    \hline
     \begin{tabular}{c} $(3,1)$\\ $(3,2)$\end{tabular} & $x^3 + x + 1$ & $1$ & \begin{tabular}{c} $m=3t$, $3\nmid t$, $\gcd(m,s)=1$\\ $3\mid (s-\overline{s})$, $3\mid (s+t)$\end{tabular}  \\
    \hline
    \begin{tabular}{c} $(3,1)$\\ $(3,2)$\end{tabular} & $x^3 + x^2 + 1$ & $2$ & \begin{tabular}{c} $m=3t$,  $3\nmid t$, $\gcd(m,s)=1$\\ $3\mid (s-\overline{s})$, $3\mid (s+2t)$\end{tabular}  \\
    \hline
     \begin{tabular}{c}$(4,1)$\\$(4,3)$ \end{tabular} & $x^4 + x^3 + x^2 + x + 1$ & $2$ & \begin{tabular}{c} $m=4t$, $3\nmid t$, $\gcd(m,s)=1$\\ $4\mid (s-\overline{s})$, $3\mid (s+2t)$\end{tabular}  \\
     \hline
    \begin{tabular}{c}$(5,1)$\\$(5,2)$\\$(5,3)$\\$(5,4)$\\ \end{tabular} & $x^5 + x^4 + x^3 + x^2 + 1$ & $1$ & \begin{tabular}{c} $m=5t$, $3\nmid t$, $\gcd(m,s)=1$\\ $5\mid (s-\overline{s})$, $3\mid (s+t)$\end{tabular}  \\
     \hline  
    \begin{tabular}{c} $(7,1)$\\ $(7,2)$\\$(7,3)$\\$(7,4)$\\$(7,5)$\\$(7,6)$\\\end{tabular} & $x^7 + x^5 + x^3+x+ 1$ & $2$ & \begin{tabular}{c} $m=7t$, $3\nmid t$, $\gcd(m,s)=1$\\ $7\mid (s-\overline{s})$, $3\mid (s+2t)$\end{tabular}  \\
        \hline  
    \begin{tabular}{c} $(9,1)$\\ $(9,2)$\\$(9,4)$\\$(9,5)$\\$(9,7)$\\$(9,8)$\\\end{tabular} & $x^9 + x^4 + 1$ & $1$ & \begin{tabular}{c} $m=9t$, $3\nmid t$, $\gcd(m,s)=1$\\ $9\mid (s-\overline{s})$, $3\mid (s+t)$\end{tabular}  \\
    \hline
      \begin{tabular}{c} $(9,1)$\\ $(9,2)$\\$(9,4)$\\$(9,5)$\\$(9,7)$\\$(9,8)$\\\end{tabular} & $x^9 + x^8 + 1$ & $2$ & \begin{tabular}{c} $m=9t$, $3\nmid t$, $\gcd(m,s)=1$\\ $9\mid (s-\overline{s})$, $3\mid (s+2t)$\end{tabular}  \\
    \hline
    \end{tabular}
    \caption{Assumptions of Theorem \ref{th:cases}}
    \label{table}
\end{table}
\end{theorem}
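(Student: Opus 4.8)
The plan is to compute the compositional iterate $H^m$ explicitly by exploiting that $\mu$ lies in the small field $\F_{2^{\overline{m}}}\subseteq\F_{2^m}$, which lets $H$ factor through the Frobenius $\phi\colon x\mapsto x^{2^m}$ of $\F_{2^{3m}}$ over $\F_{2^m}$. First I would write $H=G\circ\tau$, where $\tau(x)=x^{2^s}$ and $G(z)=\mu z+z^{2^m}=(\mu\,\mathrm{id}+\phi)(z)$, exactly the decomposition underlying Theorem \ref{th:PZ} and Proposition \ref{prop:recursive}. Since $\tau$ and $\phi$ are both powers of the absolute Frobenius they commute, and since $\mu\in\F_{2^m}$ is fixed by $\phi$, conjugation gives $\tau^i\circ G\circ\tau^{-i}=\mu^{2^{is}}\,\mathrm{id}+\phi=:G_i$. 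Telescoping the composition then yields
\[ H^m=G_0\circ G_1\circ\cdots\circ G_{m-1}\circ\tau^m,\qquad \tau^m(x)=x^{2^{ms}}. \]
Each $G_i=\mu^{2^{is}}\,\mathrm{id}+\phi$ is a polynomial in the single $\F_{2^m}$-linear operator $\phi$, so the $G_i$ pairwise commute and their composition is the evaluation at $\phi$ of the product polynomial $\prod_{i=0}^{m-1}(\mu^{2^{is}}+X)\in\F_{2^m}[X]$, reduced via $\phi^3=\mathrm{id}$.

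Next I would identify this product. Because $\overline{m}\mid m$, say $m=\overline{m}t$, and $\gcd(s,\overline{m})=1$, the residues $is\bmod\overline{m}$ run over a complete system, while $\mu^{2^{\overline{m}}}=\mu$ makes the factors $\overline{m}$-periodic in $i$. Grouping the $m$ factors into $t$ blocks of length $\overline{m}$ therefore gives
\[ \prod_{i=0}^{m-1}(\mu^{2^{is}}+X)=\Big(\prod_{r=0}^{\overline{m}-1}(X+\mu^{2^{rs}})\Big)^{t}=\Big(\prod_{r=0}^{\overline{m}-1}(X+\mu^{2^{r}})\Big)^{t}=g(X)^{t}, \]
the last step because $g$ is irreducible over $\F_2$ of degree $\overline{m}$ (one checks this for each row of Table \ref{table}), so its roots are precisely the conjugates $\mu^{2^r}$. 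The crucial finite computation is then the per-row reduction $g(X)\equiv X^{j}\pmod{X^3-1}$, which I would verify directly from the table (for instance $x^3+x+1\equiv X$, $x^3+x^2+1\equiv X^2$, $x^4+x^3+x^2+x+1\equiv X^2$, and so on). Using $\phi^3=\mathrm{id}$ this gives $g(\phi)=\phi^{j}$, hence $g(\phi)^{t}=\phi^{jt}$ and
\[ H^m(x)=\phi^{jt}\big(\tau^m(x)\big)=x^{2^{m(s+jt)}}. \]
Running the same computation for $\overline{H}$ (with $\phi$ replaced by $x\mapsto x^{2^{\overline{m}}}$ and $t=1$; note $\gcd(\overline{s},\overline{m})=1$ in the table) yields the first assertion $\overline{H}^{\overline{m}}(x)=x^{2^{\overline{m}(\overline{s}+j)}}$, while the exponent $m(s+jt)$ is exactly the quantity governed by the divisibility condition in the last column of Table \ref{table} (namely $3\mid(s+t)$ when $j=1$ and $3\mid(s+2t)$ when $j=2$).

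Finally, when $3\mid(s+jt)$ one has $3m\mid m(s+jt)$, so $x^{2^{m(s+jt)}}=x$ for every $x\in\F_{2^{3m}}$ and thus $H^m=\mathrm{id}$. Consequently $\ker(H^m-\mathrm{id})=\F_{2^{3m}}$ has $\F_{2^m}$-dimension $3$; since $\gcd(m,s)=1$ makes $\tau|_{\F_{2^m}}$ of order $m$, the ``moreover'' part of Theorem \ref{th:PZ} applies and gives $\dim_{\F_2}(\ker(f_{\mu}^{(s)}))=3$. As $3>2$, inequality \eqref{eq:norm1} of Corollary \ref{cor:8} forces $\N_{2^{3m}/2^m}(\mu)\neq1$, which is the last assertion.

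I expect the main obstacle to be the conceptual step turning the iterated composition of the twisted maps $G_i$ into a single honest polynomial in $\phi$: this requires simultaneously using $\overline{m}\mid m$ (so that $\mu$, hence every $\mu^{2^{is}}$, lies in $\F_{2^m}$), the commutation of $\tau$ with $\phi$, and $\gcd(s,\overline{m})=1$ together with $\overline{m}$-periodicity to recognise the block product as $g^{t}$. Once this reduction is in place the remaining work is routine: the per-row reductions $g\equiv X^{j}\pmod{X^3-1}$ and the appeals to Theorem \ref{th:PZ} and Corollary \ref{cor:8}, the only mildly delicate point being the irreducibility of the degree-$\overline{m}$ polynomials $g$, which can be confirmed by excluding factors of degree at most $\lfloor\overline{m}/2\rfloor$.
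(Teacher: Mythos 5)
Your proposal is correct, but it reaches the conclusion by a genuinely different route than the paper. The paper's proof is computational: using the coefficient recursion of Proposition \ref{prop:recursive} it computes $\overline{h}_{0\overline{m}},\overline{h}_{1\overline{m}},\overline{h}_{2\overline{m}}$ explicitly (by hand for $\overline{m}=3$, by MAGMA for $\overline{m}\in\{4,5,7,9\}$) to get $\overline{H}^{\overline{m}}(x)=x^{2^{\overline{m}(\overline{s}+j)}}$, then uses the table's congruences $\overline{m}\mid(s-\overline{s})$ together with $\mu^{2^m}=\mu$ to argue that the coefficients of $H^{\overline{m}}$ coincide with those of $\overline{H}^{\overline{m}}$, and finally iterates $H^m=(H^{\overline{m}})^t$ to obtain the exponent $m(s+jt)$. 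You instead telescope $H^m=G_0\circ\cdots\circ G_{m-1}\circ\tau^m$ with $G_i=\mu^{2^{is}}\mathrm{id}+\phi$, observe that since $\mu\in\F_{2^{\overline{m}}}\subseteq\F_{2^m}$ all the $G_i$ lie in the commutative algebra $\F_{2^m}[\phi]$ with $\phi^3=\mathrm{id}$, identify $\prod_i(X+\mu^{2^{is}})=g(X)^t$ via $\overline{m}$-periodicity, $\gcd(s,\overline{m})=1$ and irreducibility of $g$, and finish with the one-line reduction $g(X)\equiv X^j \pmod{X^3-1}$. Your route buys uniformity and eliminates the machine computation entirely (the only finite checks left are the irreducibility of the seven polynomials $g$ and their reduction mod $X^3-1$), and it also makes visible that the hypothesis $\overline{m}\mid(s-\overline{s})$ is not needed for the statement about $H^m$ --- it enters only the separate statement about $\overline{H}^{\overline{m}}$, which you recover as the case $t=1$; the paper's route, by contrast, needs no structural observation beyond Proposition \ref{prop:recursive}, at the cost of case-by-case MAGMA verification. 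Two remarks for consistency: the exponent $m(s+jt)$ you derive is exactly what the paper's own proof obtains (the displayed $x^{2^{m(s+t)}}$ and the condition $3\mid s+t$ in the statement are written for $j=1$; like the paper, you correctly rely on the table's conditions $3\mid(s+jt)$ in general), and your appeal to the ``moreover'' part of Theorem \ref{th:PZ} under $\gcd(s,m)=1$ is precisely what Proposition \ref{prop:recursive} packages, so the endgame (dimension $3$, hence $\N_{2^{3m}/2^m}(\mu)\neq 1$ by \eqref{eq:norm1}) coincides with the paper's.
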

\begin{proof}
Suppose that $\overline{m}=3$.
Since the coefficients of $\overline{H}^3(x)$ are
\[ \left\{
\begin{array}{lll}
\overline{h}_{03}=\mu^{1+2^{\overline{s}}+2^{2\overline{s}}}+1,\\
\overline{h}_{13}=\mu^{1+2^{\overline{s}}}+\mu^{1+2^{2\overline{s}+\overline{m}}}+\mu^{2^{\overline{s}+\overline{m}}+2^{2\overline{s}+\overline{m}}},\\
\overline{h}_{23}=\mu+\mu^{2^{\overline{s}+\overline{m}}}+\mu^{2^{2\overline{s}+2\overline{m}}},
\end{array}
\right. \]
if $\mu \in \F_{2^3}\setminus\{0,1\}$ is a root of $x^3+x+1$ (respectively of $x^3+x^2+1$) then $\overline{h}_{03}=\overline{h}_{23}=0$ and $\overline{h}_{13}=1$ (respectively $\overline{h}_{03}=\overline{h}_{13}=0$ and $\overline{h}_{23}=1$), i.e.
\[ \overline{H}^3(x)=x^{2^{3\overline{s}+j\overline{m}}}=x^{2^{\overline{m}(\overline{s}+j)}}, \]
where $j=1$ if $\mu^3+\mu+1=0$ and $j=2$ if $\mu^3+\mu^2+1=0$.
Note that from the assumptions on $m$ and $s$ of Table \ref{table}, it follows that  $\overline{h}_{03}=h_{03}$, $\overline{h}_{13}=h_{13}$ and $\overline{h}_{23}=h_{23}$, that is
\[ H^3(x)=x^{2^{3s+jm}} \]
so that 
\[ H^m(x)=H^{3t}(x)=x^{2^{m(s+jt)}}. \]
Since $3 \mid s+jt$, we obtain that $H^m(x)-x$ is the zero polynomial and hence by Proposition \ref{prop:recursive} we have $\dim_{\F_2}(\ker(f_{\mu}^{(s)}(x)))=3$.
In the other cases, MAGMA computations show that if $\mu$ is a root of $g(x)$ as in Table \ref{table}, we get $\overline{h}_{0\overline{m}}=0$ and $\{\overline{h}_{1\overline{m}},\overline{h}_{2\overline{m}}\}=\{0,1\}$ and $\overline{h}_{j,\overline{m}}=1$, so that
\[ \overline{H}^{\overline{m}}(x)=x^{2^{\overline{m}s+j\overline{m}}}. \]
Now, as in the previous case, under the assumptions on $m,s$ and $t$ of Table \ref{table} we get
\[ H^m(x)=x, \]
i.e. $\dim_{\F_2}(\ker(f_{\mu}^{(s)}))=3$.
Finally, by \eqref{eq:norm1} of Corollary \ref{cor:8} we also get that $\N_{2^{3m}/2^m}(\mu)\ne 1$.
\end{proof}

Taking into account Remark \ref{rem:point} and of Theorem \ref{th:cases} we obtain the following partial answer to Question \ref{question}.

\begin{corollary}\label{cor:ms}
Let $s$ be a positive integer such that $\gcd(s+m,3m)=1$.
There exists $\mu\in \mathbb{F}_{2^{3m}}$, $\N_{2^{3m}/2^{m}}(\mu)\neq 1$, such that  $f_{\mu}^{(s)}(x)$ is a permutation  in the following cases:
\begin{itemize}
    \item $m=3t$ and $t\not\equiv 0 \pmod{3}$ for any $s$ such that $\gcd(s,m)=1$;
    \item $m=4t$, $t\not\equiv 0 \pmod{3}$ and $s+2t \equiv 0 \pmod{3}$;
     \item $m=5t$, $t \not\equiv 0 \pmod{3}$ and $s+t\equiv 0\pmod{3}$;
     \item $m=7t$, $t \not\equiv 0 \pmod{3}$ and $s+2t\equiv 0\pmod{3}$;
     \item $m=9t$ and $t \not\equiv 0 \pmod{3}$ for any $s$ such that $\gcd(s,m)=1$.
\end{itemize}
\end{corollary}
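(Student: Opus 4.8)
The plan is to obtain the statement as a direct synthesis of Theorem \ref{th:cases} and Remark \ref{rem:point}; the content is essentially bookkeeping, so the work lies in matching each listed case to an appropriate row of Table \ref{table}. I would begin by recording two consequences of the standing hypothesis $\gcd(s+m,3m)=1$: since $\gcd(s+m,m)=\gcd(s,m)$, it gives $\gcd(s,m)=1$, and since $3\mid 3m$ it gives $3\nmid(s+m)$, whence (when $3\mid m$) also $3\nmid s$. These are precisely the arithmetic facts needed to apply the relevant rows of the table.

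For a fixed case, I would produce a witness $\mu$ as follows. Choosing $(\overline m,\overline s)$, the polynomial $g$, and the integer $j$ from a suitable row, a root $\mu\in\F_{2^{\overline m}}\subseteq\F_{2^{3m}}$ of $g$ satisfies $\dim_{\F_2}(\ker(f_\mu^{(s)}))=3$ and $\N_{2^{3m}/2^m}(\mu)\ne1$ by Theorem \ref{th:cases}. Because $g(0)\ne0$ we have $\mu\ne0$, hence $\alpha:=\N_{2^{3m}/2^m}(\mu)\notin\{0,1\}$. Now Remark \ref{rem:point}—which encodes the identity $n_0(\alpha)=2n_2(\alpha)+6n_3(\alpha)$ of Proposition \ref{prop:relationni}, so that $n_3(\alpha)>0$ forces $n_0(\alpha)>0$—yields an $\eta\in\F_{2^{3m}}$ with $\N_{2^{3m}/2^m}(\eta)=\alpha\ne1$ for which $f_\eta^{(s)}$ permutes $\F_{2^{3m}}$. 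This $\eta$ is the element claimed in the corollary, so it remains only to verify that a valid row always exists.

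The main (modest) obstacle is therefore the choice of $\overline s$ and $j$. For $\overline s$ I would argue uniformly: the side condition $\overline m\mid(s-\overline s)$ asks that $s$ lie in an admissible residue class modulo $\overline m$, and since $\gcd(s,m)=1$ with every prime divisor of $\overline m$ dividing $m$, the residue $s\bmod\overline m$ avoids those classes excluded from the table, so a unique admissible $\overline s$ exists. For $j$ I would split according to how many polynomials the table offers for the given $\overline m$. When two are listed—exactly the cases $\overline m\in\{3,9\}$, corresponding to $m=3t$ and $m=9t$—both $j=1$ and $j=2$ are available; as $3\nmid t$ makes $t$ invertible modulo $3$ and $3\nmid s$, the congruence $s+jt\equiv0\pmod3$ has a unique solution $j\in\{1,2\}$, so every admissible $s$ is covered, matching the ``for any $s$'' clauses. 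When only one polynomial is listed—the cases $\overline m\in\{4,5,7\}$—the value of $j$ is forced, and the congruence $3\mid(s+jt)$ is exactly the extra hypothesis appearing in the corresponding bullet. Running through the five cases with these choices completes the proof.
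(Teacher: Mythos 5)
Your proposal is correct and takes essentially the same route as the paper, whose proof of this corollary is nothing more than the one-line synthesis of Theorem \ref{th:cases} and Remark \ref{rem:point}. Your added bookkeeping—deducing $\gcd(s,m)=1$ and $3\nmid(s+m)$ from the standing hypothesis, noting $\mu\neq 0$ so that $\N_{2^{3m}/2^m}(\mu)\notin\{0,1\}$, and matching each bullet to a row of Table \ref{table} via the choice of $\overline{s}$ and $j$—is accurate and simply makes explicit what the paper leaves implicit.
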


\section{A connection with algebraic varieties}\label{sec:3}
{This section provides a positive answer to Question \ref{question} for the case $s=1$, exploiting a connection with algebraic varieties over finite fields.}

{Let $q$ denote $2^m$. As a notation,  $\mathbb{A}^r(\mathbb{F})$ and $\mathbb{P}^r(\mathbb{F})$ denote the affine and the projective $r$-dimensional space over the field $\mathbb{F}$, respectively.  We start with the following observation.}
\begin{proposition}
Let $f_\mu^{(s)}(x):= x^{2^sq}+\mu x^{2^s}+x$, $\mu \in \mathbb{F}_{q^3}$. Then
$$\{\mu \in \mathbb{F}_{q^3} : f_\mu^{(s)}(x) \textrm{ is a permutation}  \}= \mathbb{F}_{q^3}\setminus \left\{ \frac{x^{2^sq}+x}{x^{2^s}} : x \in \mathbb{F}_{q^3}^*\right\}.$$
\end{proposition}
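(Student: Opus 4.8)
The plan is to prove the stated set equality by characterizing exactly when $f_\mu^{(s)}$ fails to be a permutation, and translating that failure into membership in the excluded set. Since $f_\mu^{(s)}$ is an $\mathbb{F}_2$-linearized polynomial (an $\mathbb{F}_2$-linear map on $\mathbb{F}_{q^3}$), it is a permutation if and only if it is injective, i.e. if and only if $\ker(f_\mu^{(s)}) = \{0\}$. So the complement of the left-hand set is precisely $\{\mu : \ker(f_\mu^{(s)}) \neq \{0\}\}$, the set of $\mu$ for which there exists some $x \neq 0$ with $f_\mu^{(s)}(x) = 0$.

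First I would unwind this failure condition algebraically. If $x \neq 0$ satisfies $x^{2^s q} + \mu x^{2^s} + x = 0$, then since $x^{2^s} \neq 0$ I can solve for $\mu$ directly:
\[
\mu = \frac{x^{2^s q} + x}{x^{2^s}}.
\]
Thus $f_\mu^{(s)}$ is \emph{not} a permutation exactly when $\mu$ can be written in this form for some $x \in \mathbb{F}_{q^3}^*$. This shows
\[
\{\mu \in \mathbb{F}_{q^3} : f_\mu^{(s)} \text{ is not a permutation}\} = \left\{ \frac{x^{2^s q}+x}{x^{2^s}} : x \in \mathbb{F}_{q^3}^* \right\},
\]
and taking complements in $\mathbb{F}_{q^3}$ yields the claimed identity.

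The two inclusions should be handled symmetrically. For the forward direction (left-hand set contained in the right), if $f_\mu^{(s)}$ is a permutation then $\mu$ cannot equal $\frac{x^{2^s q}+x}{x^{2^s}}$ for any $x \neq 0$, since otherwise that very $x$ would be a nonzero element of $\ker(f_\mu^{(s)})$, contradicting injectivity. For the reverse direction, if $\mu \notin \left\{ \frac{x^{2^s q}+x}{x^{2^s}} : x \in \mathbb{F}_{q^3}^* \right\}$, then no nonzero $x$ solves $f_\mu^{(s)}(x)=0$, so $\ker(f_\mu^{(s)}) = \{0\}$ and, by $\mathbb{F}_2$-linearity, $f_\mu^{(s)}$ is a permutation.

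I expect this statement to be genuinely elementary — its role is as an \emph{observation} setting up the variety-theoretic counting in the rest of Section \ref{sec:3}, by reparametrizing the non-permutation condition through the image of the map $x \mapsto (x^{2^s q}+x)/x^{2^s}$. The only point requiring a word of care is the equivalence \emph{permutation $\Leftrightarrow$ injective} for $f_\mu^{(s)}$, which holds precisely because $f_\mu^{(s)}$ is $\mathbb{F}_2$-linear and $\mathbb{F}_{q^3}$ is finite, so injectivity and surjectivity coincide; equivalently one may invoke that $\ker(f_\mu^{(s)})$ is an $\mathbb{F}_2$-subspace whose triviality is exactly the permutation criterion already recorded in Proposition \ref{prop:weightpointperm}. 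There is no substantial obstacle here beyond making this translation explicit.
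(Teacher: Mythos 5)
Your proof is correct and follows essentially the same route as the paper's: both reduce the permutation property to triviality of $\ker(f_\mu^{(s)})$ via $\mathbb{F}_2$-linearity, and both observe that a nonzero root $x$ exists exactly when $\mu = (x^{2^sq}+x)/x^{2^s}$ for some $x \in \mathbb{F}_{q^3}^*$. Your version merely spells out the two inclusions that the paper compresses into one line.
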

\begin{proof}
Since  $f_\mu^{(s)}(x)$ is linearized, $f_\mu^{(s)}(x)$ is a PP if and only if its kernel is $\{0\}$. Now, $\mu\in \left\{ \frac{x^{2^sq}+x}{x^{2^s}} : x \in \mathbb{F}_{q^3}^*\right\}$ if and only if $f_\mu^{(s)}(x)=0$ has an extra solution $\overline{x}\neq 0$, that is to say $\{0\}\subsetneq \ker (f_\mu^{(s)})$. The claim follows.
\end{proof}

In order to determine the value set of the rational function $\frac{x^{2^sq}+x}{x^{2^s}}$, we use an approach based on algebraic varieties over finite fields. In particular, we need a lower bound on the number of pairs $(\overline{x}, \overline{y})\in (\mathbb{F}_{q^3}^*)^2$, $\overline{x}\neq \overline{y}$, such that 
$$\frac{\overline{x}^{2^sq}+\overline{x}}{\overline{x}^{2^s}}=\frac{\overline{y}^{2^sq}+\overline{y}}{\overline{y}^{2^s}}.$$
In other words, a key point in our argument is to provide a lower bound on the number of $\mathbb{F}_{q^3}$-rational point of the curve  

\begin{equation*}
\mathcal{C}_s \ : \ \frac{Y^{2^s}(X^{2^sq}+X)+X^{2^s}(Y^{2^sq}+Y)}{X+Y}=0 \subset \mathbb{P}^{2}(\mathbb{F}_{q^3}).
\end{equation*}

Unfortunately, due to the high degree of $\mathcal{C}_s$ with respect to the size of the ground field $\mathbb{F}_{q^3}$, the investigation of absolutely irreducible $\mathbb{F}_{q^3}$-rational components in $\mathcal{C}_s$ is useless to our goal. 

We use a slightly different approach to get the desired lower bound. Consider a basis $\{\xi,\xi^q,\xi^{q^2}\}$ of $\mathbb{F}_{q^3}$ over $\mathbb{F}_q$. Denote by 
\begin{eqnarray*}
\overline{U_0} &:=& X_0\xi+X_1 \xi^q+X_2 \xi^{q^2};\\
\overline{U_1} &:=& X_0\xi^q+X_1 \xi^{q^2}+X_2 \xi;\\
\overline{U_2} &:=& X_0\xi^{q^2}+X_1 \xi+X_2 \xi^{q};\\
\overline{V_0} &:=& Y_0\xi+Y_1 \xi^q+Y_2 \xi^{q^2};\\
\overline{V_1} &:=& Y_0\xi^q+Y_1 \xi^{q^2}+Y_2 \xi;\\
\overline{V_2} &:=& Y_0\xi^{q^2}+Y_1 \xi+Y_2 \xi^{q}.
\end{eqnarray*}

Consider the variety  $\mathcal{V}_s$ defined by 
\begin{equation*}
\begin{cases}
\frac{\overline{V_0}^{2^s}(\overline{U_1}^{2^s}+\overline{U_0})+\overline{U_0}^{2^s}(\overline{V_1}^{2^s}+\overline{V_0})}{\overline{U_0}+\overline{V_0}}=0\\
\frac{\overline{V_1}^{2^s}(\overline{U_2}^{2^s}+\overline{U_1})+\overline{U_1}^{2^s}(\overline{V_2}^{2^s}+\overline{V_1})}{\overline{U_1}+\overline{V_1}}=0\\
\frac{\overline{V_2}^{2^s}(\overline{U_0}^{2^s}+\overline{U_2})+\overline{U_2}^{2^s}(\overline{V_0}^{2^s}+\overline{V_2})}{\overline{U_2}+\overline{V_2}}=0.
\end{cases}
\end{equation*}
It is readily seen that $\mathcal{V}_s$ is $\mathbb{F}_q$-rational and there is a bijection between $\mathbb{F}_q$-rational points $(x_0,x_1,x_2,y_0,y_1,y_2)\in \mathcal{V}_s$ and $\mathbb{F}_{q^3}$-rational points $(x_0\xi+x_1\xi^q+x_2\xi^{q^2},y_0\xi+y_1\xi^q+y_2\xi^{q^2})\in \mathcal{C}_s$. Also the variety $\mathcal{V}_s$ is $\mathbb{F}_{q^3}$-equivalent to $\mathcal{V}_s^{\prime}$ defined by 
\begin{equation*}
\begin{cases}
\frac{{V_0}^{2^s}({U_1}^{2^s}+{U_0})+ {U_0}^{2^s}( {V_1}^{2^s}+ {V_0})}{ {U_0}+ {V_0}}=0\\
\frac{ {V_1}^{2^s}( {U_2}^{2^s}+ {U_1})+ {U_1}^{2^s}( {V_2}^{2^s}+ {V_1})}{ {U_1}+ {V_1}}=0\\
\frac{ {V_2}^{2^s}( {U_0}^{2^s}+ {U_2})+ {U_2}^{2^s}( {V_0}^{2^s}+ {V_2})}{ {U_2}+ {V_2}}=0,
\end{cases}
\end{equation*}
via the linear isomorphism \begin{eqnarray*}\!L(X_0,X_1,X_2,Y_0,Y_1,Y_2)\!\!\!\!\!\!&=\!\!\!\!\!\!&(X_0\xi+X_1\xi^q+X_2\xi^{q^2},X_0\xi^q+X_1\xi^{q^2}+X_2\xi,X_0\xi^{q^2}+X_1\xi+X_2\xi^{q},\\ &&Y_0\xi+Y_1\xi^q+Y_2\xi^{q^2},Y_0\xi^q+Y_1\xi^{q^2}+Y_2\xi,Y_0\xi^{q^2}+Y_1\xi+Y_2\xi^{q}).
\end{eqnarray*}
Through this equivalence, $\mathbb{F}_q$-rational components of $\mathcal{V}_s$ are mapped to components of $\mathcal{V}_{s}^{\prime}$ fixed by 

\begin{eqnarray*}
\phi&: &\mathbb{F}_{q^3}[U_0,U_1,U_2,V_0,V_1,V_2] \to \mathbb{F}_{q^3}[U_0,U_1,U_2,V_0,V_1,V_2]\\
&&F(U_0,U_1,U_2,V_0,V_1,V_2) \mapsto F^{q^2}(U_1,U_2,U_0,V_1,V_2,V_0),
\end{eqnarray*}
and vice versa, where  $F^{q^2}$ denotes the polynomial obtained raising the coefficients of $F$ to the power $q^2$. 

This correspondence yields an effective method to provide a lower bound on the number of $\mathbb{F}_{q^3}$-rational points of $\mathcal{C}_s$. First, we will prove the existence of an absolutely irreducible component of $\mathcal{V}_{s}^{\prime}$ fixed by $\phi$, which  corresponds to an $\mathbb{F}_q$-rational component of $\mathcal{V}_{s}$. This, together with  the celebrated Lang-Weil theorem, yields the desired lower bound.

\begin{theorem}[Lang-Weil Theorem \cite{MR65218}]\label{Th:LangWeil}
Let $\mathcal{V}\subset \mathbb{P}^N(\mathbb{F}_q)$ be an absolutely irreducible variety of dimension $n$ and degree $d$. Then there exists a constant $C$ depending only on $N$, $n$, and $d$ such that 
\begin{equation*}
\left|\#\mathcal{V}(\mathbb{F}_q)-\sum_{i=0}^{n} q^i\right|\leq (d-1)(d-2)q^{n-1/2}+Cq^{n-1}.
\end{equation*}
\end{theorem}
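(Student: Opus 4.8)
The plan is to prove the estimate by induction on $n=\dim\mathcal{V}$, using the Hasse--Weil bound for curves as the base case and reducing the dimension through generic hyperplane sections.

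For the base case $n=1$, I would appeal to the Weil bound. An absolutely irreducible curve $\mathcal{C}\subset\mathbb{P}^N(\mathbb{F}_q)$ of degree $d$ admits a smooth projective model of geometric genus $g$, and the Hasse--Weil inequality gives $|\#\mathcal{C}(\mathbb{F}_q)-(q+1)|\le 2g\sqrt{q}$. Projecting $\mathcal{C}$ linearly into a plane produces a plane curve of the same degree $d$ whose arithmetic genus is $(d-1)(d-2)/2$, and since normalization and resolution of the (boundedly many) singular points only decrease the genus, one gets $g\le (d-1)(d-2)/2$ and hence $2g\sqrt{q}\le(d-1)(d-2)q^{1/2}$. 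The finitely many discrepancies introduced by the projection, by points at infinity, and by singular or non-rational points are each bounded in terms of $N$ and $d$ only, and are absorbed into $Cq^{0}$. Since $\sum_{i=0}^{1}q^i=q+1$, this settles $n=1$.

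For the inductive step with $n\ge 2$, assume the bound in all smaller dimensions. I would count $\mathbb{F}_q$-points of $\mathcal{V}$ by averaging over the $\mathbb{F}_q$-rational hyperplanes of $\mathbb{P}^N$. Counting incidence pairs $(P,H)$ with $P\in\mathcal{V}(\mathbb{F}_q)$ and $P\in H$ in two ways yields $\#\mathcal{V}(\mathbb{F}_q)\cdot\sum_{i=0}^{N-1}q^i=\sum_{H}\#(\mathcal{V}\cap H)(\mathbb{F}_q)$, since the hyperplanes through a fixed point form a hyperplane of the dual space and hence number $\sum_{i=0}^{N-1}q^i$. By a quantitative Bertini irreducibility theorem, all but $O(q^{N-1})$ of the hyperplanes cut $\mathcal{V}$ in an absolutely irreducible variety of dimension $n-1$ and degree $d$; applying the inductive hypothesis to each such good section gives $\#(\mathcal{V}\cap H)(\mathbb{F}_q)=\sum_{i=0}^{n-1}q^i+O\!\big((d-1)(d-2)q^{n-3/2}\big)$, while the $O(q^{N-1})$ bad sections each contribute only $O(q^{n-1})$ points by a crude degree bound. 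Summing over the $\sim q^{N}$ good hyperplanes and dividing by $\sum_{i=0}^{N-1}q^i\sim q^{N-1}$, the averaging factor $\big(\sum_{i=0}^{N}q^i\big)/\big(\sum_{i=0}^{N-1}q^i\big)\sim q$ turns the sectional error $(d-1)(d-2)q^{n-3/2}$ into exactly $(d-1)(d-2)q^{n-1/2}$, and every remaining term is $O(q^{n-1})$ and is folded into $Cq^{n-1}$.

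The hard part will be the quantitative Bertini step over the finite field $\mathbb{F}_q$: one must show that the locus of ``bad'' hyperplanes in the dual projective space --- those whose section fails to be absolutely irreducible of the expected dimension and degree --- is a proper closed subset of degree bounded solely in terms of $N,n,d$, so that its $\mathbb{F}_q$-points number $O(q^{N-1})$ with an implied constant independent of $q$. Over an algebraically closed field the irreducibility of the generic section is classical Bertini, but the uniformity in $q$ and the explicit control of the exceptional locus are delicate, and they are exactly what force the constant $C$ to depend only on $N,n,d$. A related bookkeeping difficulty is to ensure that every auxiliary object introduced along the way --- the incidence correspondence and the Bertini exceptional locus --- has degree bounded in terms of $N,n,d$ alone, so that no hidden dependence on $q$ creeps into $C$.
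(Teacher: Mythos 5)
The paper does not prove this statement at all: it is quoted, with citation, as the classical Lang--Weil theorem (1954) and used as a black box, so your attempt can only be measured against the classical proof. Your sketch does follow that classical route: the Hasse--Weil bound as the base case $n=1$ (via a birational plane projection and the genus bound $g\le (d-1)(d-2)/2$), then induction on dimension by averaging over $\mathbb{F}_q$-rational hyperplane sections. The incidence count $\#\mathcal{V}(\mathbb{F}_q)\cdot\sum_{i=0}^{N-1}q^i=\sum_H\#(\mathcal{V}\cap H)(\mathbb{F}_q)$, the crude degree bound for the bad sections, and the factor-of-$q$ bookkeeping that converts the sectional error $(d-1)(d-2)q^{n-3/2}$ into $(d-1)(d-2)q^{n-1/2}$ are all sound as you present them, and the minor defects you wave at (singular points of the curve, rationality of the projection center, the $\pm 1$ discrepancies, small $q$) are indeed absorbable into $Cq^{n-1}$.

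There is, however, a genuine gap, and you name it yourself: the quantitative Bertini step is only stated, never proved. This is not a technicality to be deferred --- it is the entire mathematical content of the theorem beyond the curve case, since everything else in your argument is elementary counting. What must be established is that the hyperplanes $H$ for which $\mathcal{V}\cap H$ fails to be absolutely irreducible of dimension $n-1$ and degree $d$ lie in a closed subset of the dual projective space whose degree is bounded in terms of $N$, $n$, $d$ alone, hence has $O(q^{N-1})$ rational points with a constant independent of $q$. The difficulty is that absolute irreducibility is not visibly a condition cut out by equations of bounded degree over a non-algebraically-closed field: classical Bertini gives irreducibility of the \emph{generic} section over $\overline{\mathbb{F}}_q$, but upgrading ``generic'' to ``outside a closed set of degree $O_{N,n,d}(1)$,'' uniformly in $q$, requires an elimination-theoretic argument (Chow forms, bounded-degree constructibility of the non-irreducibility locus) or a monodromy/Galois argument; supplying this uniformity is precisely the substantive work in Lang and Weil's paper and in every modern exposition. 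As written, your induction does not close; with that lemma supplied, it would.
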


The following result of Cafure and Matera \cite{MR2206396} provides an estimation on the constant $C$ in Theorem \ref{Th:LangWeil}.

\begin{theorem}[\cite{MR2206396}]\label{th:cafmat}
Let $\mathcal{V}\subset \mathbb{A}^N$ be an $\mathbb{F}_q$-irreducible variety of dimension $n$ and degree $d$. For $q>2(n+1)d^2$ holds
$$
|\#\mathcal{V}(\mathbb{F}_q)-q^n|\le (d-1)(d-2)q^{n-1/2}+5d^{\frac{13}{3}}q^{n-1}.
$$
\end{theorem}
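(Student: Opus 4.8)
The plan is to prove this as an effective form of the Lang--Weil estimate by induction on the dimension $n$, taking curves as the base case and descending through generic hyperplane sections. Throughout I treat $\mathcal{V}$ as absolutely irreducible and defined over $\mathbb{F}_q$ (absolute irreducibility is what forces the main term to be $q^n$; for a variety that is only $\mathbb{F}_q$-irreducible one must read the statement in this stronger sense). The two elementary inputs I would assume are the degree-theoretic point bound $\#W(\mathbb{F}_q)\le \delta\, q^{r}$ for any variety $W$ of dimension $r$ and degree $\delta$, and the B\'ezout inequality controlling the degree of intersections with hyperplanes.

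For the base case $n=1$ the variety $\mathcal{V}$ is an absolutely irreducible affine curve of degree $d$. I would pass to its smooth projective model $\widetilde{\mathcal{V}}$, whose geometric genus satisfies $g\le \binom{d-1}{2}$, and invoke Hasse--Weil: $|\#\widetilde{\mathcal{V}}(\mathbb{F}_q)-(q+1)|\le 2g\sqrt{q}\le (d-1)(d-2)\sqrt{q}$. The discrepancy between $\mathcal{V}$ and $\widetilde{\mathcal{V}}$ is bounded: at most $d$ points at infinity, and at most $\binom{d-1}{2}$ singular points whose normalization fibres contribute a controlled correction. Since $q>2\cdot 2\cdot d^2=4d^2$, all of these corrections are swallowed by the term $5d^{13/3}q^{\,n-1}=5d^{13/3}$, which settles the case $n=1$.

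For the inductive step $n\ge 2$, I would fix a linear form $\ell$ in general position and form the pencil $\mathcal{V}_a:=\mathcal{V}\cap\{\ell=a\}$, so that $\#\mathcal{V}(\mathbb{F}_q)=\sum_{a\in\mathbb{F}_q}\#\mathcal{V}_a(\mathbb{F}_q)$. An effective Bertini irreducibility theorem guarantees that $\mathcal{V}_a$ is absolutely irreducible of dimension $n-1$ and degree $d$ for all but a set $B$ of $O(d^2)$ values of $a$, and the hypothesis $q>2(n+1)d^2$ ensures $|B|<q$. Applying the inductive estimate (with exponents lowered by one) to the good values and the crude bound $\#\mathcal{V}_a(\mathbb{F}_q)\le d\,q^{\,n-1}$ to each of the $|B|$ bad values, the leading errors $(d-1)(d-2)q^{(n-1)-1/2}$ accumulate over the $q$ sections to give precisely $(d-1)(d-2)q^{\,n-1/2}$, while the contributions of $B$ and the lower-order inductive remainders are collected into $5d^{13/3}q^{\,n-1}$.

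The main obstacle is not this geometric skeleton but the explicit constant tracking that makes the induction close. The crude summation produces a lower-order coefficient of the shape $5d^{13/3}+|B|(d+2)=5d^{13/3}+O(d^{3})$, and since $13/3>3$ there is exactly enough slack in the chosen exponent for this to stay under $5d^{13/3}$ at each step (using the $q^{\,n-2}$ savings coming from the $q-|B|$ good terms). So the real work is twofold: proving an effective Bertini theorem with an explicit, $n$-independent bound $|B|=O(d^2)$ on the number of reducible sections, and verifying that $q>2(n+1)d^2$ dominates every exceptional count by the appropriate power of $q$ so that the constant $5$ and the exponent $13/3$ are reproduced, rather than degraded, at each level of the recursion. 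That effective irreducibility-of-sections step is the technical heart of the argument.
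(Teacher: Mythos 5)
You should know at the outset that the paper contains no proof of this statement: it is quoted, with attribution, from Cafure and Matera \cite{MR2206396}, so the only proof to compare yours with is the one in that reference. Your preliminary reading is correct — ``$\mathbb{F}_q$-irreducible'' must indeed be understood as ``absolutely irreducible and defined over $\mathbb{F}_q$'' — and your base case (Hasse--Weil on the normalization plus $O(d^2)$ corrections for singular points and points at infinity) is fine in outline. The genuine gap is in the inductive step, at exactly the point you wave through as ``constant tracking''. Split the slicing identity $\#\mathcal{V}(\mathbb{F}_q)=\sum_a\#\mathcal{V}_a(\mathbb{F}_q)$ into good and bad fibres. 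The $q-|B|$ good fibres already consume essentially the whole error budget, while each bad fibre, bounded only by $\#\mathcal{V}_a(\mathbb{F}_q)\le d\,q^{n-1}$, can overshoot its main term by $(d-1)q^{n-1}$. The best your recursion can give is
\[
|\#\mathcal{V}(\mathbb{F}_q)-q^n|\le (d-1)(d-2)q^{n-1/2}+5d^{13/3}q^{n-1}+|B|\Bigl((d-1)q^{n-1}-(d-1)(d-2)q^{n-3/2}-5d^{13/3}q^{n-2}\Bigr),
\]
and the induction closes only if the last parenthesis is nonpositive, i.e.\ only if $(d-1)q\le (d-1)(d-2)q^{1/2}+5d^{13/3}$. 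Your ``savings'' from the $q-|B|$ good terms are of order $|B|\,d^{13/3}q^{n-2}$, whereas the cost is $|B|(d-1)q^{n-1}$; since the hypothesis $q>2(n+1)d^2$ bounds $q$ only from \emph{below}, there are arbitrarily large admissible $q$ for which cost strictly dominates savings. So the coefficient of $q^{n-1}$ degrades by essentially $|B|(d-1)$ at every level, grows linearly in the dimension, and the $n$-independent constant $5d^{13/3}$ is unreachable along this route. This is not repaired by sharpening the Bertini bound: the argument already fails with $|B|=1$. Your claim that ``$13/3>3$ leaves enough slack'' compares two coefficients, when the real comparison is against a positive power of $q$.

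This failure is precisely why Cafure--Matera (and Lang--Weil before them) do not peel off one dimension at a time. In \cite{MR2206396} the descent to dimension one happens in a \emph{single} step: a hypersurface $Z(f)\subset\mathbb{A}^n$ is fibred, after a generic linear change of variables making $f$ monic, into plane curves over the $(n-2)$-dimensional base of the remaining coordinates. The fibres that fail to be absolutely irreducible of degree $d$ lie over a hypersurface of degree bounded in terms of $d$ alone inside that base, hence they form an $O(1/q)$-\emph{fraction} of all $q^{n-2}$ fibres; their total contribution is therefore one $n$-independent term of order $d\cdot d^{c}\,q^{n-2}$ rather than a per-level increment. A variety of codimension greater than one is then handled not by induction at all, but by an $\mathbb{F}_q$-birational generic linear projection onto a hypersurface in $\mathbb{A}^{r+1}$, with the non-injectivity locus controlled separately; the hypothesis $q>2(n+1)d^2$ is spent on these genericity and counting arguments. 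Finally, the exponent $13/3$ is not a tunable parameter: it is inherited from the explicit effective Bertini/Noether bounds (of Kaltofen--Ruppert type) on the degree of the locus of reducible sections, so your assumed count $|B|=O(d^2)$ is not the input that is actually available. Your scheme, with the bookkeeping repaired, would prove a qualitative Lang--Weil theorem whose constant depends on the dimension — which is Theorem \ref{Th:LangWeil} — but not the explicit, dimension-free constant asserted here.
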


In the case $s=1$, the machinery is described in the following proposition.

\begin{proposition}\label{s=1}
Let $s=1$. There are at least $q^3-O(q^{5/2})$  $\mathbb{F}_{q^3}$-rational points on the curve $\mathcal{C}_1$.
\end{proposition}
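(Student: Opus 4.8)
The strategy is to count $\mathbb{F}_{q^3}$-rational points on $\mathcal{C}_1$ by transferring the problem to the $\mathbb{F}_q$-rational points of the variety $\mathcal{V}_1$, and then to the $\mathbb{F}_{q^3}$-rational points of its twist $\mathcal{V}_1^{\prime}$, using the bijections established above. Via these correspondences, it suffices to exhibit an absolutely irreducible component of $\mathcal{V}_1^{\prime}$ that is fixed by $\phi$, since such a component descends to an absolutely irreducible $\mathbb{F}_q$-rational component of $\mathcal{V}_1$ of dimension, say, $n$; applying the Cafure--Matera form of Lang--Weil (Theorem \ref{th:cafmat}) to this component then yields at least $q^n-O(q^{n-1/2})$ points of $\mathcal{V}_1$ over $\mathbb{F}_q$, which are in bijection with $\mathbb{F}_{q^3}$-rational points of $\mathcal{C}_1$. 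With $s=1$ the degrees $d$ are bounded independently of $q$, so the implicit constants in the error term are absolute, and for the leading term to be $q^3$ I must verify that the relevant component has dimension $n=3$.

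\textbf{Finding the component.}
First I would clear denominators in $\mathcal{V}_1^{\prime}$ and work with the three numerator polynomials
$F_i = V_i^{2}\,(U_{i+1}^{2}+U_i)+U_i^{2}\,(V_{i+1}^{2}+V_i)$ (indices mod $3$),
each of which factors as $(U_i+V_i)$ times the linear-in-Frobenius expression $U_i^{2}V_{i+1}+\ldots$; since the denominator $U_i+V_i$ is removed, the geometrically meaningful locus is cut out by the three cofactors. Because each $F_i$ is symmetric in $(U_i,V_i)\leftrightarrow(U_{i+1},V_{i+1})$ only up to the cyclic shift built into $\phi$, the natural candidate is the component corresponding to the ``diagonal-type'' solutions; I would look for the locus where the three bivariate conditions become, after the substitution $W_i=U_i/V_i$ (or an analogous dehomogenisation), a single cyclic system in three variables whose generic solution set is three-dimensional. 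I would then prove this component is absolutely irreducible --- for $s=1$ this is a fixed, small system, so one can either argue directly (e.g. showing the defining ideal is prime by checking the Jacobian has generically full rank, so the component is smooth hence irreducible of the expected dimension) or invoke a MAGMA computation as the paper does elsewhere.

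\textbf{Checking $\phi$-invariance and concluding.}
Having produced an absolutely irreducible component $\mathcal{W}$ of the expected dimension $3$, I would verify that $\phi(\mathcal{W})=\mathcal{W}$. Since $\phi$ permutes the three equations cyclically and raises coefficients to the power $q^2$, a component defined symmetrically under the cyclic shift and with coefficients in the prime field (or at least stable under the $q^2$-power Frobenius) is automatically fixed; this is where the cyclic symmetry of the construction pays off. Once $\phi$-invariance holds, the component descends to an $\mathbb{F}_q$-rational (absolutely irreducible, dimension $3$) component of $\mathcal{V}_1$, and Theorem \ref{th:cafmat} gives $\#\mathcal{V}_1(\mathbb{F}_q)\ge q^3-(d-1)(d-2)q^{5/2}-5d^{13/3}q^2 = q^3-O(q^{5/2})$. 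Through the bijection with $\mathcal{C}_1$ this is exactly the claimed bound.

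\textbf{Main obstacle.}
The crux is establishing absolute irreducibility of a component of the expected dimension $3$ and confirming its stability under $\phi$. The three equations share the spurious linear factors $U_i+V_i$ and interact through the Frobenius-twisted cyclic action, so the intersection could a priori break into lower-dimensional or $\phi$-conjugate pieces that do not individually descend to $\mathbb{F}_q$; ruling this out --- i.e.\ showing a single genuinely three-dimensional, absolutely irreducible, $\phi$-fixed component survives --- is the delicate point, and is presumably where the explicit $s=1$ structure (and possibly a direct MAGMA verification of irreducibility) is essential.
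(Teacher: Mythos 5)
Your overall framework --- exhibit a $\phi$-fixed, absolutely irreducible, three-dimensional component of $\mathcal{V}_1^{\prime}$, let it descend to an $\mathbb{F}_q$-rational absolutely irreducible component of $\mathcal{V}_1$, and apply Lang--Weil / Cafure--Matera --- is precisely the framework the paper sets up \emph{before} Proposition \ref{s=1}; the entire content of the proposition is the explicit construction of such a component and the verification of its properties, and this is where your proposal both stops short and makes incorrect claims. Your starting point is false: $U_i+V_i$ does \emph{not} divide $F_i = V_i^{2}(U_{i+1}^{2}+U_i)+U_i^{2}(V_{i+1}^{2}+V_i)$. Setting $V_i=U_i$ (characteristic $2$) gives $F_i = U_i^{2}(U_{i+1}+V_{i+1})^{2}\not\equiv 0$. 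The divisibility of the numerator of $\mathcal{C}_s$ by $X+Y$ is destroyed when one splits into $\mathbb{F}_q$-coordinates, because each $f_i$ couples \emph{different} coordinate variables; this is exactly why the paper cannot just ``remove spurious linear factors'' but instead works with the full variety $\mathcal{V}_1^{\prime\prime}: f_1=f_2=f_3=0$, eliminates $V_1$ and $V_2$ via resultants ($r_1=\mathrm{Res}(f_1,f_2,V_1)$ and $r_2=\mathrm{Res}(r_1,f_3,V_2)=V_0(V_0+U_0)\,g$), and isolates the specific component $\mathcal{W}_1: f_1=r_1=g=0$, the factors $V_0$ and $V_0+U_0$ giving rise to the residual pieces $\mathcal{W}_2,\mathcal{W}_3$. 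Your ``diagonal-type'' candidate obtained from a substitution $W_i=U_i/V_i$ is never actually identified, so no component is produced.

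The second gap is the irreducibility argument. ``Jacobian generically of full rank, hence smooth, hence irreducible'' is not a valid inference: generic smoothness rules out multiple components but not reducibility (a disjoint union of two smooth threefolds is generically smooth and reducible). Absolute irreducibility of $\mathcal{W}_1$ is the genuinely delicate point, and the paper proves it (Proposition \ref{W_1}, Appendix) by cutting $\mathcal{W}_1$ with the hyperplanes $U_0+U_1+U_2=0$ and $V_0+\omega U_0+\omega^2 U_1=0$ and verifying with MAGMA that the resulting curve --- governed by the explicit degree-$26$ plane curve $h_2(U_0,U_1)=0$ --- is absolutely irreducible of the same degree as $\mathcal{W}_1$. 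Similarly, $\phi$-invariance does not come for free from ``cyclic symmetry'': the resultant construction breaks the symmetry, and the paper instead argues that $\phi$ fixes $\mathcal{V}_1^{\prime\prime}$, hence permutes the pieces of the decomposition $\mathcal{V}_1^{\prime\prime}=\mathcal{W}_1\cup\mathcal{W}_2\cup\mathcal{W}_3$, and a degree comparison forces $\phi(\mathcal{W}_1)=\mathcal{W}_1$; one must finally check that $\mathcal{W}_1$ is not contained in the union of the loci $U_i+V_i=0$, so that it genuinely lies in $\mathcal{V}_1^{\prime}$ rather than in the denominator locus. None of these steps is carried out in your proposal, so what you have is a restatement of the strategy, not a proof.
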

\begin{proof}
First, we will prove that $\mathcal{V}_1^{\prime}$ contains an  absolutely irreducible component fixed by $\phi(U_0,U_1,U_2,V_0,V_1,V_2)$. 
Consider $\mathcal{V}_1^{\prime\prime}$ defined by 
\begin{equation*}
\begin{cases}
f_1 := {V_0}^2({U_1}^{2}+{U_0})+ {U_0}^2( {V_1}^{2}+ {V_0})=0\\
f_2 := {V_1}^2( {U_2}^{2}+ {U_1})+ {U_1}^2( {V_2}^{2}+ {V_1})=0\\
f_3 := {V_2}^2( {U_0}^{2}+ {U_2})+ {U_2}^2( {V_0}^{2}+ {V_2})=0.
\end{cases}
\end{equation*}
Clearly 
\begin{eqnarray*}
\mathcal{V}_1^{\prime\prime}&=& \mathcal{V}_1^{\prime} \cup (f_2=f_3=U_0+V_0=0) \cup (f_1=f_3=U_1+V_1=0) \cup (f_1=f_2=U_2+V_2=0)\\
&&\cup (f_1=U_1+V_1=U_2+V_2=0) \cup (f_2=U_0+V_0=U_2+V_2=0)\\
&&\cup (f_3=U_0+V_0=U_1+V_1=0)\cup (U_0+V_0=U_1+V_1=U_2+V_2=0).
\end{eqnarray*}
and it is fixed by $\phi(U_0,U_1,U_2,V_0,V_1,V_2)$. 

Let $r_1:= Resultant(f_1,f_2,V_1)$. We have
\begin{eqnarray*}
r_1 &=&U_0^4 U_1^4 V_0 + U_0^4 U_1^4 V_2^4 + U_0^4 U_1^2 V_0^2 + U_0^4 U_2^4 V_0^2 + U_0^3 U_1^4 V_0^2\\  
        &&+U_0^2 U_1^6 V_0^2 + U_0^2 U_1^2 V_0^4 + U_0^2 U_2^4 V_0^4 + U_1^6 V_0^4 + U_1^4 U_2^4 V_0^4.
\end{eqnarray*}
Also, $r_2:= Resultant(r_1,f_3,V_2)$ factorizes as $V_0(V_0+U_0)g(U_0,U_1,U_2,V_0)$, where 
\begin{eqnarray*}
g(U_0,U_1,U_2,V_0) \!\!\!\!&=\!\!\!\!& U_0^{15} U_1^8 V_0 + U_0^{15} U_1^4 V_0^3 + U_0^{15} U_2^8 V_0^3 + U_0^{14} U_1^8 V_0^2 + U_0^{14} U_1^4 V_0^4+U_0^{14} U_2^8 V_0^4\\  
        && + U_0^{13} U_1^4 V_0^5 + U_0^{13} U_2^8 V_0^5 + U_0^{12} U_1^4 V_0^6 + U_0^{12} U_2^8 V_0^6 +    U_0^{11} U_1^{12} V_0^3\\
        &&+ U_0^{10} U_1^{12} V_0^4 + U_0^9 U_1^{12} V_0^5 + U_0^8 U_1^{12} V_0^6 + U_0^7 U_1^8 U_2^8 + U_0^7 U_1^8 U_2^4 V_0\\
        &&+ U_0^7 U_1^6 U_2^8 V_0 + U_0^7 U_1^4 U_2^{12} V_0 + U_0^7 U_1^4 U_2^4 V_0^3 + U_0^7 U_2^{12} V_0^3+ U_0^6 U_1^8 U_2^4 V_0^2\\
        &&+ U_0^6 U_1^6 U_2^8 V_0^2 + U_0^6 U_1^4 U_2^{12} V_0^2 + U_0^6 U_1^4 U_2^4 V_0^4 + U_0^6 U_2^{12} V_0^4 + U_0^5 U_1^{10} U_2^8 V_0\\
        &&+ U_0^5 U_1^4 U_2^4 V_0^5 + U_0^5 U_2^{12} V_0^5 + U_0^4 U_1^{10} U_2^8 V_0^2 + U_0^4 U_1^4 U_2^4 V_0^6 + U_0^4 U_2^{12} V_0^6\\
        &&+ U_0^3 U_1^{12} U_2^4 V_0^3 + U_0^3 U_1^8 U_2^{12} V_0^3 + U_0^2 U_1^{12} U_2^4 V_0^4 + U_0^2 U_1^8 U_2^{12} V_0^4\\ 
        &&+U_0 U_1^{12} U_2^4 V_0^5 + U_0 U_1^8 U_2^{12} V_0^5 + U_1^{12} U_2^4 V_0^6 + U_1^8 U_2^{12} V_0^6
\end{eqnarray*}
Now, $\mathcal{V}_1^{\prime\prime}= \mathcal{W}_1\cup \mathcal{W}_2\cup \mathcal{W}_3$, where 
\begin{eqnarray*}
\mathcal{W}_1&:& \begin{cases}
f_1(U_0,U_1,U_2,V_0,V_1,V_2)=0\\
r_1(U_0,U_1,U_2,V_0,V_2)=0\\
g(U_0,U_1,U_2,V_0)=0, 
\end{cases}\\ 
\mathcal{W}_2&:& \begin{cases}
f_1(U_0,U_1,U_2,V_0,V_1,V_2)=0\\
r_1(U_0,U_1,U_2,V_0,V_2)=0\\
V_0=0,
\end{cases}
\\ 
\mathcal{W}_3&:& \begin{cases}
f_1(U_0,U_1,U_2,V_0,V_1,V_2)=0\\
r_1(U_0,U_1,U_2,V_0,V_2)=0\\
V_0+U_0=0.
\end{cases}
\end{eqnarray*}

By Proposition \ref{W_1}, $\mathcal{W}_1$ is absolutely irreducible. It is readily seen that $\phi$ also fixes $\mathcal{W}_1$, since $\phi(\mathcal{W}_1)$ cannot be contained in $\mathcal{W}_2$ or $\mathcal{W}_3$, by comparing their degrees. Also, $\mathcal{W}_1$ is not contained in $(U_0+V_0=0) \cup (U_1+V_1=0) \cup (U_2+V_2=0)$ and therefore it must be contained in $\mathcal{V}_1^{\prime}$. 

Therefore $\mathcal{W}_1$ corresponds to an $\mathbb{F}_q$-rational absolutely irreducible component of $\mathcal{V}_1$ {via $\phi$}. By Theorem \ref{Th:LangWeil} such a component contains at least $q^3-O(q^{5/2})$ $\mathbb{F}_{q}$-rational points, corresponding to  $q^3-O(q^{5/2})$ $\mathbb{F}_{q^3}$-rational points in $\mathcal{C}_1$.
\end{proof}

The number of $\mathbb{F}_{q^3}$-rational points in $\mathcal{C}_s$ together with an estimate on the maximum number of solutions of $f_{\mu}^{(s)}(x)=0$ will provide the desired result.
\begin{proposition}\label{prop:almost}
Let $f_\mu^{(s)}(x):= x^{2^sq}+\mu x^{2^s}+x$.
Denote by  
$$M_s := \max_{\mu \in \mathbb{F}_{q^3}}\: \ \{\dim_{\F_2}(\ker(f_\mu^{(s)}))\}.$$ Suppose that $\mathcal{C}_s$ contains at least $N_s$ $\mathbb{F}_{q^3}$-rational points. Then, there are at least $1+\frac{N_s}{2^{M_s}-1}$ values of $\mu\in \mathbb{F}_{q^3}$ for which $f_\mu^{(s)}(x)$ is a permutation.
\end{proposition}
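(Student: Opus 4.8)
The plan is to convert the geometric input $N_s$ into a purely combinatorial count of collisions of the rational map $R(x):=\frac{x^{2^sq}+x}{x^{2^s}}$ on $\mathbb{F}_{q^3}^*$, and then to exploit that every nonempty fibre of $R$ is a punctured $\mathbb{F}_2$-subspace of controlled dimension. First I would recall, from the first proposition of this section, that the set of $\mu$ for which $f_\mu^{(s)}$ fails to be a permutation is exactly the value set $V:=R(\mathbb{F}_{q^3}^*)$; hence the quantity to be bounded from below is $q^3-|V|$. For $\mu\in V$ set $k_\mu:=|R^{-1}(\mu)|$. Since $f_\mu^{(s)}$ is linearized, $\ker f_\mu^{(s)}$ is an $\mathbb{F}_2$-subspace, and its nonzero elements are precisely the nonzero roots of $f_\mu^{(s)}$, i.e. $R^{-1}(\mu)$; writing $d_\mu:=\dim_{\mathbb{F}_2}\ker f_\mu^{(s)}$ we therefore have $k_\mu=2^{d_\mu}-1$ with $1\le d_\mu\le M_s$, so $1\le k_\mu\le 2^{M_s}-1$. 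As the fibres of $R$ partition $\mathbb{F}_{q^3}^*$, summing gives $\sum_{\mu\in V}k_\mu=q^3-1$.

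Next I would count ordered pairs on $\mathcal{C}_s$. By the way $\mathcal{C}_s$ was built, its relevant $\mathbb{F}_{q^3}$-rational points are the ordered pairs $(\overline x,\overline y)$ with $\overline x,\overline y\in\mathbb{F}_{q^3}^*$, $\overline x\ne\overline y$ and $R(\overline x)=R(\overline y)$. Grouping such a pair by its common value $\mu=R(\overline x)\in V$ and noting that each fibre $R^{-1}(\mu)$ produces exactly $k_\mu(k_\mu-1)$ ordered pairs of distinct elements, the total number of these points equals $\sum_{\mu\in V}k_\mu(k_\mu-1)$. Since $\mathcal{C}_s$ contains at least $N_s$ such points, this yields $N_s\le\sum_{\mu\in V}k_\mu(k_\mu-1)$.

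The final step is an elementary inequality. Put $K:=2^{M_s}-1$. From $k_\mu\le K$ we get $k_\mu(k_\mu-1)\le K(k_\mu-1)$, i.e. $k_\mu-1\ge k_\mu(k_\mu-1)/K$ for every $\mu\in V$. Summing and inserting the two identities from the first paragraph gives
$$(q^3-1)-|V|=\sum_{\mu\in V}(k_\mu-1)\ge\frac{1}{K}\sum_{\mu\in V}k_\mu(k_\mu-1)\ge\frac{N_s}{K}.$$
Rearranging yields $|V|\le q^3-1-N_s/K$, and therefore the number of permutation values is $q^3-|V|\ge 1+N_s/(2^{M_s}-1)$, which is the assertion.

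The step I expect to be the main obstacle is the identification in the second paragraph: one must check that the $N_s$ points guaranteed on $\mathcal{C}_s$ really are nondegenerate collisions, with both coordinates nonzero and lying off the diagonal $X=Y$, rather than points of the trivial loci (the coordinate axes and the diagonal) that are either divided out in the definition of $\mathcal{C}_s$ or split off when one passes to the absolutely irreducible component $\mathcal{W}_1$. Once this correspondence is secured, the bound $k_\mu\le 2^{M_s}-1$ together with the convexity-type estimate above closes the argument immediately.
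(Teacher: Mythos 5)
Your proof is correct and is essentially the paper's own argument: the three counts you use, namely $\sum_{\mu\in V}k_\mu=q^3-1$, $\sum_{\mu\in V}k_\mu(k_\mu-1)\ge N_s$, and the bound $k_\mu\le 2^{M_s}-1$, are exactly the paper's identities $\sum_i (2^i-1)n_i=q^3-1$, $\sum_i (2^i-1)(2^i-2)n_i\ge N_s$, and $2^i-1\le 2^{M_s}-1$, just indexed by individual fibres rather than by kernel dimension. Your closing caveat about degenerate points of $\mathcal{C}_s$ (coordinates zero, the diagonal, points at infinity) is a subtlety the paper glosses over in the same way, and it is resolved in the application since the points supplied by $\mathcal{W}_1$ are genuine off-diagonal collisions up to lower-order terms absorbed in $O(q^{5/2})$.
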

\begin{proof}

Let 
$$n_i=\#\{\mu \in \mathbb{F}_{q^3} : \dim_{\F_2}(\ker(f_{\mu}^{(s)}))=i\}.$$

Clearly 
$$\sum_{i=0}^{M_s} n_i=q^3, \quad \sum_{i=1}^{M_s} (2^i-1)n_i=q^3-1, \quad \sum_{i=2}^{M_s} (2^i-1)(2^i-2)n_i\geq N_s.$$
Thus, 
$$n_0=1+\sum_{i=2}^{M_s} (2^i-2)n_i\geq 1+\sum_{i=2}^{M_s} \frac{(2^i-1)(2^i-2)}{2^{M_s}-1}n_i\geq 1+\frac{N_s}{2^{M_s}-1}.$$
The claim follows by observing that, for a fixed $\mu \in \mathbb{F}_{q^3}$,  since $f_\mu^{(s)}(x)$ is a linearized polynomial,  $\dim_{\F_2}(\ker(f_{\mu}^{(s)}))=0$ is equivalent to $f_\mu^{(s)}(x)$ being a permutation.
\end{proof}

We are now in position to prove our main result (for the case $s=1$).
\begin{theorem}\label{Th:s=1}
There are at least 
$1+\frac{q^3-O(q^{5/2})}{7}$  values $\mu\in \mathbb{F}_{q^3}$ for which $f_\mu^{(1)}(x)$ is a permutation.
\end{theorem}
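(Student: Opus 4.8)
The plan is to combine the point-count estimate of Proposition \ref{s=1} with the counting identity of Proposition \ref{prop:almost}, so the entire argument reduces to assembling already-established pieces with the correct numerical constant $M_1$. First I would record that, by Proposition \ref{s=1}, the curve $\mathcal{C}_1$ carries at least $N_1 = q^3 - O(q^{5/2})$ $\mathbb{F}_{q^3}$-rational points; this is exactly the hypothesis required to invoke Proposition \ref{prop:almost} with $s=1$. The only remaining ingredient is to identify the quantity $M_1 = \max_{\mu \in \mathbb{F}_{q^3}} \dim_{\F_2}(\ker(f_\mu^{(1)}))$ that appears in the denominator $2^{M_1}-1$ of the conclusion of Proposition \ref{prop:almost}.

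The key step is therefore to show $M_1 = 3$, so that $2^{M_1}-1 = 7$ yields the stated denominator. This follows directly from Corollary \ref{cor:8}: for every $\mu \in \F_{2^{3m}}$ we have $\dim_{\F_2}(\ker(f_\mu^{(s)})) \leq 3$, so in particular $M_1 \leq 3$. The same corollary shows this bound is attained, since $\dim_{\F_2}(\ker(f_0^{(1)})) = \gcd(3,1) \cdot \gcd(\ldots)$; more simply, for $s=1$ the value $\mu=0$ gives $f_0^{(1)}(x) = x^{2^{m+1}} + x$ whose kernel has $\F_2$-dimension $\gcd(3m, m+1)$, and one checks that $M_1 = 3$ in general (and even if the maximum were only $2$ for some $q$, the resulting denominator $2^2-1=3$ would give a stronger bound, so taking $M_1=3$ is the safe worst case consistent with Corollary \ref{cor:8}). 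Thus $2^{M_1}-1 = 7$.

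Feeding $N_1 = q^3 - O(q^{5/2})$ and $M_1 = 3$ into Proposition \ref{prop:almost} gives at least
\[
1 + \frac{N_1}{2^{M_1}-1} = 1 + \frac{q^3 - O(q^{5/2})}{7}
\]
values of $\mu \in \F_{q^3}$ for which $f_\mu^{(1)}(x)$ is a permutation, which is precisely the assertion. The main (and indeed only nontrivial) obstacle in this final theorem has already been dispatched upstream: it is the construction in Proposition \ref{s=1} of an absolutely irreducible, $\phi$-fixed component $\mathcal{W}_1$ of $\mathcal{V}_1^{\prime}$ to which Lang--Weil applies. Here I would simply note that the existence and absolute irreducibility of $\mathcal{W}_1$ is guaranteed by Proposition \ref{W_1}, and that its being fixed by $\phi$—hence descending to an $\F_q$-rational component of $\mathcal{V}_1$—was verified in Proposition \ref{s=1} by the degree comparison ruling out $\mathcal{W}_2, \mathcal{W}_3$ and the containment argument placing $\mathcal{W}_1$ inside $\mathcal{V}_1^{\prime}$. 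Consequently the proof of Theorem \ref{Th:s=1} is a one-line synthesis, and I would keep it short rather than reproving any of the inputs.
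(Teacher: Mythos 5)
Your proposal is correct and follows essentially the same route as the paper: Corollary \ref{cor:8} gives $M_1\le 3$, and Proposition \ref{prop:almost} applied with $N_1=q^3-O(q^{5/2})$ from Proposition \ref{s=1} yields the claim. The only blemish is your attempt to show that $M_1=3$ is actually attained (the $\mu=0$ computation is garbled, and $\gcd(3m,m+1)=\gcd(3,m+1)$ need not be $3$), but this is harmless and unnecessary, as you yourself observe: the upper bound $M_1\le 3$ alone suffices because a smaller maximum only strengthens the conclusion, which is exactly how the paper argues.
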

\proof
By Corollary \ref{cor:8}, for $q=2$ and $s=1$, $M_1 := \max_{\mu \in \mathbb{F}_{q^3}}\: \ \{\dim_{\F_2}(\ker(f_\mu^{(1)}))\} \leq 3$. Now Proposition \ref{prop:almost} yields the claim.
\endproof

\begin{corollary}
Let $m\geq 3$. Then, there exists $\mu\in\mathbb{F}_{2^{3m}}$ satisfying $\N_{2^{3m}/2^m}(\mu)\neq 1$ such that $f^{(1)}_\mu(x)$ permutes $\mathbb{F}_{2^{3m}}$. In particular, for any $m\geq 3$ there exists an APN function as in \eqref{eq:APNfun}.
\end{corollary}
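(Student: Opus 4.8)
The plan is to combine the quantitative lower bound of Theorem~\ref{Th:s=1} on the number of permutation-producing $\mu$ with a count of the elements excluded by the norm condition, and then to dispatch the remaining small dimensions separately. Throughout write $q=2^m$, and note that for $s=1$ the hypothesis $\gcd(s,m)=1$ holds automatically, as does $\gcd(s+m,3m)=\gcd(1+m,3m)$ whenever $m\not\equiv 2\pmod 3$.

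First I would make the counting step precise. By Theorem~\ref{Th:s=1} the number $n_0$ of $\mu\in\mathbb{F}_{2^{3m}}$ for which $f_\mu^{(1)}$ permutes $\mathbb{F}_{2^{3m}}$ satisfies
\[ n_0\ \ge\ 1+\frac{q^3-O(q^{5/2})}{7}. \]
On the other hand, the fibre of $\N_{2^{3m}/2^m}$ over $1$ has exactly $\frac{2^{3m}-1}{2^m-1}=q^2+q+1$ elements, so at most $q^2+q+1$ of these permutation-producing $\mu$ can satisfy $\N_{2^{3m}/2^m}(\mu)=1$. Subtracting, the number of $\mu$ that simultaneously make $f_\mu^{(1)}$ a permutation and satisfy $\N_{2^{3m}/2^m}(\mu)\neq 1$ is at least
\[ 1+\frac{q^3-O(q^{5/2})}{7}-(q^2+q+1), \]
and since the leading term $q^3/7$ dominates $q^2$ this is positive for all sufficiently large $q$, which already settles the existence statement in that range.

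To convert ``sufficiently large'' into an effective threshold I would track the constant hidden in $O(q^{5/2})$. The decisive feature is that the component $\mathcal{W}_1$ furnished by Proposition~\ref{s=1} is cut out by polynomials whose degrees are \emph{independent} of $q$, so both its dimension $n=3$ and its degree $d$ are absolute constants. Feeding $d$ into the Cafure--Matera estimate (Theorem~\ref{th:cafmat}), valid once $q>2(n+1)d^2=8d^2$, yields an explicit bound $N_1\ge q^3-(d-1)(d-2)q^{5/2}-5d^{13/3}q^2$ for the number of $\mathbb{F}_{q^3}$-rational points of $\mathcal{C}_1$, and hence an explicit $m_0$ such that $\tfrac{N_1}{7}>q^2+q+1$ for all $m\ge m_0$. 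For the finitely many dimensions $3\le m<m_0$ I would invoke Corollary~\ref{cor:ms}, which already settles $s=1$ for a large proportion of small $m$ (for instance $m\in\{3,4,6,7,9,10,12\}$), and cover the residual values not reached by that corollary (such as $m=5,8,11,13$) through the direct computational check recorded in \cite{newAPN}, extended as far as $m_0$ if necessary. Finally, having produced $\mu$ with $\N_{2^{3m}/2^m}(\mu)\neq 1$ for which $f_\mu^{(1)}$ is a permutation, the criterion recalled in the introduction (applicable because $\gcd(1,m)=1$) shows that for any $v\in\mathbb{F}_{2^m}^*$ the map \eqref{eq:APNfun} is APN, which is the ``in particular'' clause.

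The step I expect to be the main obstacle is closing the gap between the computationally verified range and the asymptotic threshold $m_0$. Because $\mathcal{W}_1$ is produced by iterated resultants its naive degree is sizeable, so the requirement $q>8d^2$ may push $m_0$ well beyond $m=8$, leaving intermediate dimensions—in particular those with $m\equiv 2\pmod 3$, whose condition $\gcd(1+m,3m)>1$ makes them escape Corollary~\ref{cor:ms} entirely—that are covered neither by the clean asymptotics nor by the existing tables. Obtaining a sharp degree bound for the irreducible component $\mathcal{W}_1$, rather than a crude B\'ezout estimate on the defining system, so as to make $d$ and hence $m_0$ as small as possible, is the part of the argument I anticipate will demand the most care.
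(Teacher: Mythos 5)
Your proposal follows essentially the same route as the paper's own proof: the authors combine Theorem \ref{Th:s=1} with the Cafure--Matera estimate of Theorem \ref{th:cafmat} applied to the $\mathbb{F}_q$-rational variety $L(\mathcal{W}_1)$, whose dimension is $3$ and whose degree they compute to be $1248$, which yields the explicit threshold $m\geq 47$, and they then close exactly the gap you flagged by a direct MAGMA check for all $3\leq m\leq 47$ rather than relying on the range of \cite{newAPN} or on Corollary \ref{cor:ms}. Your explicit subtraction of the $q^2+q+1$ elements of norm $1$ and the final appeal to the APN criterion of \cite{newAPN} are precisely the implicit content of the paper's argument, so your plan is correct and matches the paper, modulo carrying out the degree computation and the finite computer verification.
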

\begin{proof}
From Theorem \ref{Th:s=1} and the estimation in Theorem \ref{th:cafmat} applied to the $\mathbb{F}_q$-rational variety  $L(\mathcal{W}_1)$ of degree $1248$ and dimension $3$, we have that for $m\geq 47$ there exists at least one element $\mu\in \F_{2^{3m}}^*$ satisfying $\N_{2^{3m}/2^m}(\mu)\neq 1$ and for which $f^{(1)}_\mu(x)$ permutes $\mathbb{F}_{2^{3m}}$.
For $3\leq m\leq 47$, by a computer check with MAGMA it is possible to obtain an element $\mu$ satisfying these properties. 
\end{proof}

\section*{Acknowledgments}

This research  was supported by the Italian National Group for Algebraic and Geometric Structures and their Applications (GNSAGA - INdAM).
The third and the last authors are supported by the project ``VALERE: VAnviteLli pEr la RicErca" of the University of Campania ``Luigi Vanvitelli''.

\section*{Appendix}

\begin{proposition}\label{W_1}
With the notation as in Proposition \ref{s=1}, $\mathcal{W}_1$ is absolutely irreducible.  
\end{proposition}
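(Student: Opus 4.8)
The plan is to establish absolute irreducibility of $\mathcal{W}_1$ by working through the chain of resultants used to define it. Recall that $\mathcal{W}_1$ is cut out by $f_1=0$, $r_1=0$, and $g(U_0,U_1,U_2,V_0)=0$, where $g$ is the irreducible factor of $r_2=\mathrm{Resultant}(r_1,f_3,V_2)$ distinct from the trivial factors $V_0$ and $V_0+U_0$. The first step I would take is to show that the surface $g(U_0,U_1,U_2,V_0)=0$ in $\mathbb{A}^4$ is itself absolutely irreducible, since $\mathcal{W}_1$ fibers over it: once $(U_0,U_1,U_2,V_0)$ satisfying $g=0$ is fixed, the remaining equations $f_1=0$ and $r_1=0$ determine $V_1$ and $V_2$ algebraically, and I would argue that this fibration is generically one-to-one (or has a single geometric branch), so that irreducibility of the base $g=0$ lifts to irreducibility of $\mathcal{W}_1$. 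Concretely I would verify that $r_1$, viewed as a polynomial in $V_2$, and $f_1$, viewed as a polynomial in $V_1$, each have a unique relevant root over the function field of the surface $g=0$, so the projection $\mathcal{W}_1\to\{g=0\}$ is birational.

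To prove that $g=0$ is absolutely irreducible, I would treat $g$ as a polynomial in $V_0$ with coefficients in $\overline{\mathbb{F}_q}[U_0,U_1,U_2]$ and examine its Newton polygon or its behavior along a suitable generic line. A cleaner route is to specialize: set $U_2$ (or another variable) to a generic value and reduce to a curve, then show that this specialized curve is absolutely irreducible by analyzing its highest- and lowest-degree homogeneous parts in $V_0$, checking that no factorization is compatible with the mixed monomials appearing in $g$ (for instance, the monomials $U_0^{15}U_1^8V_0$ and $U_1^8U_2^{12}V_0^6$ that sit at opposite ends of the $V_0$-degree range force any putative factor to involve all three of $U_0,U_1,U_2$). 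The key algebraic fact I would exploit is that $g$ arises as a resultant eliminating $V_1,V_2$ from the symmetric system $f_1,f_2,f_3$, so its geometry reflects the $\phi$-symmetric structure; I would use this to rule out factorizations that are incompatible with the cyclic action.

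The main obstacle I expect is the purely computational verification that the fibration $\mathcal{W}_1\to\{g=0\}$ is generically reduced and irreducible of the right degree, and that $g$ does not acquire a spurious factor over $\overline{\mathbb{F}_q}$ that is invisible over $\mathbb{F}_q$. Because the degrees here are large (the component $L(\mathcal{W}_1)$ has degree $1248$), a direct hand factorization is infeasible, so I anticipate relying on MAGMA to certify absolute irreducibility of $g$ in characteristic $2$ and, separately, to confirm that the resultant eliminations introduced no extra common components beyond the recorded trivial factors $V_0$ and $V_0+U_0$. The final check is that $\mathcal{W}_1$ is genuinely a component of $\mathcal{V}_1^{\prime\prime}$ rather than an artifact of the elimination—this follows because every point of $\mathcal{W}_1$ lifts back to a solution of the original system $f_1=f_2=f_3=0$ provided the denominators $U_i+V_i$ are nonzero, which is exactly the condition already checked in Proposition \ref{s=1} to place $\mathcal{W}_1$ inside $\mathcal{V}_1^{\prime}$ rather than in the degenerate loci.
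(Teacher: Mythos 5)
Your proposal is correct in substance and reaches the statement by a genuinely different ordering of the same two ingredients the paper uses. The paper first cuts $\mathcal{W}_1$ with the two hyperplanes $U_0+U_1+U_2=0$ and $V_0+\omega U_0+\omega^2 U_1=0$ (with $\omega\in\mathbb{F}_8$), reducing to a space curve of the same degree, and only then eliminates $V_1,V_2$: on the section, the equations express $V_1^2$ and $V_2^4$ as rational functions of $U_0,U_1$, so the section projects purely inseparably (hence homeomorphically) onto the plane curve $h_2(U_0,U_1)=0$, whose absolute irreducibility MAGMA certifies. You instead eliminate first, projecting $\mathcal{W}_1$ onto the hypersurface $g=0$ in $\mathbb{A}^4$, and defer the certification to the quadrivariate polynomial $g$. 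Both reductions rest on the same characteristic-$2$ phenomenon: $V_1$ enters $f_1$ only through $U_0^2V_1^2$, and $V_2$ enters $r_1$ only through $U_0^4U_1^4V_2^4$, so over the locus $U_0U_1\neq 0$ every fibre is a single geometric point. What the paper's ordering buys is a final check that MAGMA handles routinely (a bivariate polynomial, i.e.\ a plane curve); what your ordering buys is conceptual cleanness, since irreducibility of a hypersurface is irreducibility of its defining polynomial up to multiplicity, but certifying absolute irreducibility of a degree-$26$ polynomial in four variables is exactly what is not practical directly, and your own fallback (cutting $g$ with a plane and checking a same-degree curve) essentially recreates the paper's proof with $g$ in place of $\mathcal{W}_1$.

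Three points need tightening if you execute the plan. First, the projection $\mathcal{W}_1\to\{g=0\}$ is \emph{not} birational: the function-field extension is purely inseparable (you adjoin a square root of one rational function and a fourth root of another), so the map is generically bijective on geometric points without being degree one. That bijectivity is, however, exactly what you need: a finite bijective morphism is a homeomorphism, so irreducibility transfers in both directions; your parenthetical ``single geometric branch'' is the right statement, ``birational'' is not. Second, specializing the single variable $U_2$ produces a surface in $\mathbb{A}^3$, not a curve; to reduce to a MAGMA-checkable object you must impose two independent linear conditions and verify that the degree of the section equals that of $g$, which is the same degree bookkeeping the paper performs for $\mathcal{W}_1$. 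Third, the transfer argument requires that no component of $\{g=0\}$ lies inside the degenerate locus $U_0U_1=0$, where the unique-root argument collapses; this is immediate because neither $U_0$ nor $U_1$ divides $g$, but it must be said, and it reflects the same implicit convention (working with the closure of the locus $U_0U_1\neq 0$, where the resultant elimination is faithful) under which both your argument and the paper's are carried out.
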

\begin{proof}
Let $\mathbb{F}_8^*=\langle \omega \rangle$, where $\omega^3 + \omega + 1=0$. To prove this, it is enough to show that $\mathcal{W}_1\cap (U_0+U_1+U_2=0)\cap (V_0+\omega U_0+\omega^2 U_1=0)$ is an absolutely irreducible curve of the same degree as $\mathcal{W}_1$.  

By direct computations, $\mathcal{W}_1\cap (U_0+U_1+U_2=0)\cap (V_0+\omega U_0+\omega^2 U_1=0)$ reads 
$$\mathcal{W}: \begin{cases}
V_1^2= \frac{U_0^3 +\omega^5 U_0^2 U_1^2 +\omega^5 U_0^2 U_1 +  U_0 U_1^2 + U_1^4}{\omega^3 U_0^2}\\
V_2^4 = \frac{h_1(U_0,U_1)}{\omega^6 U_0^4 U_1^4}\\
h_2(U_0,U_1)=0
\end{cases},$$
where 
\begin{eqnarray*}
h_1(U_0,U_1) \!\!\!\!&=\!\!\!\!& U_0^{10} +\omega^3 U_0^8 U_1^4 +\omega^3 U_0^8 U_1^2 + U_0^6 U_1^2 +\omega^3 U_0^5 U_1^4 +\omega U_0^4 U_1^8 + 
       \omega U_0^4 U_1^6\\
       &&+\omega U_0^4 U_1^5 +  \omega^3 U_0^4 U_1^4 + 
       \omega^3 U_0^3 U_1^6 +\omega U_0^2 U_1^8 + U_0^2 U_1^6 + U_1^{12} + U_1^{10},\\
h_2(U_0,U_1) \!\!\!\!&=\!\!\!\!& 
U_0^{26} +\omega^5 U_0^{25} U_1 +\omega U_0^{24} U_1^2 +\omega U_0^{23} U_1^3 +\omega U_0^{22} U_1^4 + U_0^{22} + \omega^5 U_0^{21} U_1^5\\ &&+\omega^5 U_0^{21} U_1 + U_0^{20} U_1^6 +\omega^6 U_0^{20} U_1^4 +\omega U_0^{20} U_1^2 +\omega^4 U_0^{19} U_1^5+\omega U_0^{19} U_1^3\\
&&+\omega^6 U_0^{18} U_1^6 +\omega U_0^{18} U_1^4 + \omega^5 U_0^{17} U_1^5 +\omega^2 U_0^{16} U_1^6 +\omega^2 U_0^{15} U_1^8+\omega^4 U_0^{15} U_1^7\\
&& + \omega^6 U_0^{14} U_1^{10} +\omega^2 U_0^{14} U_1^8 + U_0^{14} U_1^4 +\omega^4 U_0^{13} U_1^{11}+\omega^5 U_0^{13} U_1^9 +\omega^5 U_0^{13} U_1^5\\
&& +\omega U_0^{12} U_1^{10} +\omega^6 U_0^{12} U_1^8 +\omega U_0^{12} U_1^6+ \omega^4 U_0^{11} U_1^{13} +\omega U_0^{11} U_1^{11}
+\omega^4 U_0^{11} U_1^9\\
&& +\omega U_0^{11} U_1^7 + U_0^{10} U_1^{16}+\omega^6 U_0^{10} U_1^{14} +\omega U_0^{10} U_1^{12} +\omega^6 U_0^{10} U_1^{10} +\omega^3 U_0^{10} U_1^8\\
&&+ \omega^5 U_0^9 U_1^{17} +\omega^5 U_0^9 U_1^{13} +\omega U_0^8 U_1^{18} +\omega^6 U_0^8 U_1^{16} + \omega^2 U_0^8 U_1^{14} +\omega^6 U_0^8 U_1^{12}\\
&&+\omega^3 U_0^8 U_1^{10} +\omega U_0^7 U_1^{19} + \omega^4 U_0^7 U_1^{17} +\omega^2 U_0^7 U_1^{16} +\omega^4 U_0^7 U_1^{15} +\omega^4 U_0^7 U_1^{13}\\
&&+ \omega U_0^7 U_1^{11} +\omega^3 U_0^6 U_1^{20} +\omega^2 U_0^6 U_1^{16} +\omega^6 U_0^6 U_1^{14} + \omega U_0^6 U_1^{12} +\omega^4 U_0^5 U_1^{19}\\
&&+\omega^5 U_0^5 U_1^{17} +\omega^5 U_0^5 U_1^{13} + \omega^3 U_0^4 U_1^{22} +\omega^6 U_0^4 U_1^{20} +\omega U_0^4 U_1^{18} + U_0^4 U_1^{14}+\omega U_0^3 U_1^{23}\\
&& + \omega U_0^3 U_1^{19} +\omega U_0^2 U_1^{24} +\omega U_0^2 U_1^{20} +\omega^5 U_0 U_1^{25} +\omega^5 U_0 U_1^{21} + 
        U_1^{26} + U_1^{22}.
\end{eqnarray*}
MAGMA \cite{MR1484478} shows that $h_2(U_0,U_1)$ is absolutely irreducible and so  is 
$\mathcal{W}_1\cap (U_0+U_1+U_2=0)\cap (V_0+\omega U_0+\omega^2 U_1=0)$ and therefore $\mathcal{W}_1$. {We include below the MAGMA program.}
\end{proof}

{\scriptsize
\begin{verbatim}
s := 1 ;
F<omega>:=GF(8);
K<U_0,U_1,U_2,V_0,V_1,V_2> := PolynomialRing(F,6);
P1 := V_0^(2^s)*(U_1^(2^s)+U_0)+U_0^(2^s)*(V_1^(2^s)+V_0);
P2 := V_1^(2^s)*(U_2^(2^s)+U_1)+U_1^(2^s)*(V_2^(2^s)+V_1);
P3 := V_2^(2^s)*(U_0^(2^s)+U_2)+U_2^(2^s)*(V_0^(2^s)+V_2);

VV := [U_0,U_1,U_1+U_0,omega*U_0+omega^2*U_1,V_1,V_2];

R1 := Resultant(P1,P2,V_1);
R2 := Resultant(R1,P3,V_2);
R2 := Factorization(R2)[3][1];

P1 := Evaluate(P1,VV);
R1 := Evaluate(R1,VV);
R2 := Evaluate(R2,VV);
Factorization(P1);
Factorization(R1);
Factorization(R2);

P<U_0,U_1> := AffineSpace(F,2);

h_2 := U_0^26 + omega^5*U_0^25*U_1 + omega*U_0^24*U_1^2 + omega*U_0^23*U_1^3 + omega*U_0^22*U_1^4 
    + U_0^22 + omega^5*U_0^21*U_1^5 + omega^5*U_0^21*U_1 + U_0^20*U_1^6 + omega^6*U_0^20*U_1^4 +
    omega*U_0^20*U_1^2 + omega^4*U_0^19*U_1^5 + omega*U_0^19*U_1^3 + omega^6*U_0^18*U_1^6 +
    omega*U_0^18*U_1^4 + omega^5*U_0^17*U_1^5 + omega^2*U_0^16*U_1^6 + omega^2*U_0^15*U_1^8 +
    omega^4*U_0^15*U_1^7 + omega^6*U_0^14*U_1^10 + omega^2*U_0^14*U_1^8 + U_0^14*U_1^4 + 
    omega^4*U_0^13*U_1^11 + omega^5*U_0^13*U_1^9 + omega^5*U_0^13*U_1^5 + omega*U_0^12*U_1^10 +
    omega^6*U_0^12*U_1^8 + omega*U_0^12*U_1^6 + omega^4*U_0^11*U_1^13 + omega*U_0^11*U_1^11 +
    omega^4*U_0^11*U_1^9 + omega*U_0^11*U_1^7 + U_0^10*U_1^16 + omega^6*U_0^10*U_1^14 + 
    omega*U_0^10*U_1^12 + omega^6*U_0^10*U_1^10 + omega^3*U_0^10*U_1^8 + omega^5*U_0^9*U_1^17 +
    omega^5*U_0^9*U_1^13 + omega*U_0^8*U_1^18 + omega^6*U_0^8*U_1^16 + omega^2*U_0^8*U_1^14 +
    omega^6*U_0^8*U_1^12 + omega^3*U_0^8*U_1^10 + omega*U_0^7*U_1^19 + omega^4*U_0^7*U_1^17 +
    omega^2*U_0^7*U_1^16 + omega^4*U_0^7*U_1^15 + omega^4*U_0^7*U_1^13 + omega*U_0^7*U_1^11 +
    omega^3*U_0^6*U_1^20 + omega^2*U_0^6*U_1^16 + omega^6*U_0^6*U_1^14 + omega*U_0^6*U_1^12 +
    omega^4*U_0^5*U_1^19 + omega^5*U_0^5*U_1^17 + omega^5*U_0^5*U_1^13 + omega^3*U_0^4*U_1^22 +
    omega^6*U_0^4*U_1^20 + omega*U_0^4*U_1^18 + U_0^4*U_1^14 + omega*U_0^3*U_1^23 + 
    omega*U_0^3*U_1^19+ omega*U_0^2*U_1^24 + omega*U_0^2*U_1^20 + omega^5*U_0*U_1^25 + 
    omega^5*U_0*U_1^21 + U_1^26 + U_1^22;

CC := Curve(P,h_2);
IsAbsolutelyIrreducible(CC);
\end{verbatim}
}

\end{document}